\newenvironment{keywords}{
       \list{}{\advance\topsep by0.35cm\relax\small
       \leftmargin=1cm
       \labelwidth=0.35cm
       \listparindent=0.35cm
       \itemindent\listparindent
       \rightmargin\leftmargin}\item[\hskip\labelsep
                                     \bfseries Keywords:]}
     {\endlist}
\renewcommand{\@biblabel}[1]{#1.} 
\newtheorem{Theorem}[equation]{Theorem}
\newtheorem{Proposition}[equation]{Proposition}
\newtheorem{Example}[equation]{Example}
\newtheorem*{rem}{Remark} 
\newenvironment{Remark}{\begin{rem}\normalfont}{\end{rem}}
\newtheorem*{rems}{Remarks} 
\newtheorem*{sol}{Solution}
\newtheorem*{nt}{Notes}
\renewcommand{\@biblabel}[1]{#1.}
\newcommand{\suml}{\sum\limits}
\newcommand{\prodl}{\prod\limits}
\newcommand{\qrfac}[2]{{\left({#1}; q\right)_{#2}}} 
\newcommand{\pqrfac}[3]{{\left({#1};#3\right)_{#2}}}
\newcommand{\walker}{\vphantom{\prod\limits_1^{N^2}}}
\numberwithin{equation}{section}
\begin{document} 
\title{\sc In Praise of\\
 an
Elementary  
Identity 
of Euler}

\author{\sc Gaurav Bhatnagar
\\
Educomp Solutions Ltd.
\\
bhatnagarg@gmail.com
}


\date{June 7, 2011}

\maketitle 

\begin{center}
{\it Dedicated to S.~B.~Ekhad and D.~Zeilberger}
\end{center}

\begin{abstract}
We survey the applications of an elementary identity used by Euler in one of his proofs of the Pentagonal Number Theorem. Using a suitably reformulated version of this identity that we call Euler's Telescoping Lemma, we give alternate proofs of all the key summation theorems for terminating Hypergeometric Series and Basic Hypergeometric Series, including the terminating Binomial Theorem, the Chu--Vandermonde sum, the Pfaff--Saalsch\" utz sum, and their $q$-analogues. We also give a proof of Jackson's $q$-analog of Dougall's sum, the sum of a terminating, balanced, very-well-poised $_8\phi_7$ sum. Our proofs are conceptually the same  as those obtained by the WZ method, but done without using a computer. We survey identities for Generalized Hypergeometric Series given by Macdonald, and prove several identities for $q$-analogs of Fibonacci numbers and polynomials and Pell numbers that have appeared in combinatorial contexts. Some of these identities appear to be new. 
\end{abstract}

\begin{keywords}
Telescoping, Fibonacci Numbers,  Pell Numbers, Derangements, Hypergeometric Series, Fibonacci Polynomials,  $q$-Fibo\-nacci Numbers,  $q$-Pell numbers, Basic Hypergeometric Series, $q$-series, Binomial Theorem, $q$-Binomial Theorem, Chu--Vandermonde sum, $q$-Chu--Vandermonde sum, Pfaff--Saalsch\" utz sum, $q$-Pfaff--Saalsch\" utz sum, $q$-Dougall summation, very-well-poised $_6\phi_5$ sum, Generalized Hypergeometric Series, WZ Method.\\
\small MSC2010: Primary 33D15;  Secondary 11B39, 33C20, 33F10, 33D65
\end{keywords}

\section{Introduction}

One of the first results in $q$-series is Euler's 1740 expansion of the product
$$(1-q)(1-q^2)(1-q^3)(1-q^4)\cdots$$
into a power series in $q$. This expansion, known as Euler's Pentagonal Number Theorem is (for $|q|<1$):
\begin{align*}
(1-q)(1-q^2)&(1-q^3)\cdots\\
&= 1-q-q^2+q^5+q^7+\cdots\\
 &= 1 + \sum_{k=1}^{\infty} (-1)^k \left( q^{\frac{k(3k-1)}{2}} +
q^{\frac{k(3k+1)}{2}}\right).\\
\end{align*}
In his proof (explained by Andrews \cite{and1} and Bell \cite{bell}) 
Euler used the following elementary identity:
\begin{equation}\label{Eulerfunda}
(1+x_1)(1+x_2)(1+x_3)\cdots =
(1+x_1)+x_2(1+x_1)+x_3(1+x_1)(1+x_2)+\cdots
\end{equation}
Add the first two terms of the RHS to get $(1+x_1)(1+x_2)$ and then add that to the third term.  Continue in this manner. 

The objective of this paper is to demonstrate that this beautiful idea \eqref{Eulerfunda}, first used by Euler  more than 250 years ago,  can be used to prove many identities---of many kinds---in a unified manner.  As a demonstration of its power, we survey a wide selection of identities, all proved using \eqref{Eulerfunda}. 

We begin our survey of identities in \S\ref{sec:telescoping} with the Fibonacci identity
$$F_1+F_2+\cdots +F_n=F_{n+2}-1,$$
for the  sequence $F_n$  given by: $F_0=0$, $F_1=1$; and for $n\geq 0$,
$$F_{n+2}=F_{n+1}+F_{n}.$$
This famous identity (due to Lucas in 1876) is a well-known example of a telescoping sum. 

Indeed, a key idea of our work is that we recognize  \eqref{Eulerfunda} as a telescoping sum.   Then it is a small matter to show that any sum that telescopes is a special case of this elementary identity. Thus we have a characterization of a telescoping series that we call Euler's Telescoping Lemma (see \S\ref{sec:telescoping-lemma} and \S\ref{sec:telescoping-charac}). 

Another important sum that follows from Euler's Telescoping Lemma is the formula for the sum of the geometric sequence
$$1+x+x^2+\cdots+x^n=\frac{x^{n+1}-1}{x-1},$$
where $x\neq 1$.
Indeed, this formula is just the first in a set of summation theorems for the so-called {\em Hypergeometric series}, and their $q$-analogs, the {\em Basic Hypergeometric Series}. 
In \S\ref{sec:hyper}, \S\ref{sec:q-series} and \S\ref{sec:dougall}, we prove all the key terminating summation theorems for these series, beginning with the Binomial Theorem, and going up to Jackson's $q$-analog of Dougall's sum for a terminating, very-well-poised and balanced $_8\phi_7$ series (Gasper and Rahman \cite[eq.~(2.6.2)]{grhyp}).

In proving these identities, we use the WZ trick, an idea due to Wilf and Zeilberger~\cite{WZ90a} to rewrite a given sum in a way that it becomes a likely candidate for telescoping. Conceptually, our proofs are the same as those found by the WZ method. However, instead of using a computer,  we use Euler's idea to manually find the telescoping. For many examples, doing so is quite easy---almost as easy as using a computer. We call our method the EZ method, given that it rests on an application of Euler's idea and the WZ trick.  This method is outlined in \S\ref{sec:WZ}. 

Another example comes from Ramanujan (see Berndt \cite[Entry 25, p.~36]{berndt4}), who found a formula for the sum of $n+1$ terms of the series
$$\frac{1}{x+a_1}+\frac{a_1}{(x+a_1)(x+a_2)}+\frac{a_1a_2}{(x+a_1)(x+a_2)(x+a_3)}+\cdots.$$
Ramanujan's sum (given in \S\ref{sec:telescoping-charac}) has a general sequence as a parameter. In \S\ref{sec:generalized} we provide extensions of Ramanujan's identity due to Macdonald, the author of \cite{mac} (see Bhatnagar and Milne~\cite{gb-milne}). We call these series {\em Generalized Hypergeometric Series}. 

Finally, in \S\ref{sec:three-term}, we show that Euler's Telescoping Lemma is 
relevant even today by deriving identities for many combinatorial sequences.  These sequences satisfy a three-term recurrence relation, and are $q$-analogs of the Fibonacci or Pell sequences. We derive many identities found earlier by Andrews \cite{and3}, 
Garrett \cite{garrett},  Briggs, Little and Sellers \cite{bls}, Goyt and Mathisen \cite{goyt-mathisen} and others. 

In addition, we find many new identities for such sequences. In fact, we write down a general set of identities satisfied by all sequences that satisfy a three-term recurrence relation of the form:
$$x_{n+2}=a_nx_{n+1}+b_nx_n.$$
These identities are generalizations of  $6$ classical Fibonacci identities, that follow from Euler's identity. 

\section{Euler's Elementary Identity and Telescoping}\label{sec:telescoping}

In this section, we recognize Euler's elementary identity as a telescoping sum.  First note that the finite form of \eqref{Eulerfunda} is:
\begin{align}
 (1+x_1)(1+x_2)&\cdots (1+x_n)= \nonumber \\
(1+x_1)+x_2&(1+x_1)+\cdots+x_n(1+x_1)(1+x_2)\cdots(1+x_{n-1}).\label{eulerfinite}
\end{align}
To recognize \eqref{eulerfinite} as a telescoping sum, set $x_k\mapsto x_k-1$ to obtain:
\begin{align*}
x_1x_2\cdots x_n&=
x_1+(x_2-1)x_1+\cdots+(x_n-1)x_1x_2\cdots x_{n-1}\\
&=1+(x_1-1)+(x_2-1)x_1+\cdots+(x_n-1)x_1x_2\cdots x_{n-1}.
\end{align*}
We can write this as:
\begin{align*}
x_1x_2\cdots x_n -1 =
\sum_{k=1}^n (x_k-1)x_1x_2\cdots x_{k-1}.
\end{align*}
Here the product $x_1x_2\cdots x_{k-1}$ is considered to be equal to $1$ if $k=1$. 

It is now clear that the RHS telescopes.  

For applications, it is convenient to 
rewrite this by setting $x_k\mapsto u_k/v_k$ and $w_k=u_k-v_k$. In this manner, we obtain:
\begin{equation}\label{eulerfinite2}
\sum_{k=1}^n w_k \frac{u_1u_2\cdots u_{k-1}}{v_1v_2\cdots v_{k}}=
\frac{u_1u_2\cdots u_{n}}{v_1v_2\cdots v_{n}} -1,
\end{equation}
where $w_k=u_k-v_k$. 
\begin{Remark}
While it looks like \eqref{eulerfinite2}  has many more variables than \eqref{eulerfinite}, the two identities are equivalent. We can recover \eqref{eulerfinite} by setting $v_k=1$ and $u_k\mapsto x_k+1$ in \eqref{eulerfinite2}. 
\end{Remark}

\begin{Example}[Fibonacci Identities]\label{lucas}
Consider the Fibonacci Numbers defined as:
$F_0=0, F_1=1;$ and for $n\geq 0$,
$$F_{n+2}=F_{n+1}+F_{n}.$$
Then the following identities hold, for $n=0, 1, 2, \dots$: 
\begin{align}
\sum_{k=1}^n F_k &= F_{n+2}-1.\label{lucas1}\\
\sum_{k=1}^n F_{2k} &= F_{2n+1}-1.\label{lucaseven}\\
\sum_{k=1}^n F_{2k-1} &= F_{2n}.\label{lucasodd}\\
\sum_{k=1}^n F_{k}^2 &= F_{n}F_{n+1}.\label{lucas-square}\\
\sum_{k=1}^n (-1)^{k+1}F_{k+1} &= (-1)^{n-1}F_{n}.\label{lucas-alternating}\\
\sum_{k=1}^n \frac{F_{k-1}}{2^k} &= 1- \frac{F_{n+2}}{2^n}.\label{lucas-div-2n}
\end{align}
\end{Example}
\begin{Remark}
Some of these identities are due to Lucas and appear in Vajda \cite{vajda}. There are many more Fibonacci Identities that can be proved by telescoping and  are special cases of \eqref{eulerfinite2}. 
\end{Remark}
\begin{proof}
To prove the first identity, we set $u_k=F_{k+2},$ and  $v_k=F_{k+1}.$ Note that $w_k=F_{k+2}-F_{k+1}=F_k.$ Substituting in \eqref{eulerfinite2}, we obtain:
\begin{equation*}
\sum_{k=1}^n F_k\frac{F_3F_4\cdots F_{k+1}}{F_2F_3\cdots F_{k+1}} =
\frac{F_3F_4\cdots F_{n+2}}{F_2F_3\cdots F_{n+1}} -1,
\end{equation*}
or
\begin{equation*}
\sum_{k=1}^n F_k/F_2 =
F_{n+2}/{F_2} -1.
\end{equation*}
Since $F_2=1$, we immediately obtain \eqref{lucas1}. 

Next, set $u_k=F_{2k+1},$ and  $v_k=F_{2k-1}.$ Note that $w_k=F_{2k+1}-F_{2k-1}=F_{2k}+F_{2k-1}-F_{2k-1}=F_{2k}.$ Substituting in \eqref{eulerfinite2}, we obtain:
\begin{equation*}
\sum_{k=1}^n F_{2k}/F_1 =
F_{2n+1}/{F_1} -1.
\end{equation*}
Since $F_1=1$, we immediately get \eqref{lucaseven}.

Next, set $u_k=F_{2k+2},$ and  $v_k=F_{2k}.$ Note that $w_k=F_{2k+1}.$ Substituting in \eqref{eulerfinite2}, we obtain:
\begin{equation*}
\sum_{k=1}^n F_{2k+1}/F_2 =
F_{2n+2}/{F_2} -1.
\end{equation*}
Since $F_2=1$, we  get 
\begin{equation*}
\sum_{k=1}^n F_{2k+1} =
F_{2n+2} -1.
\end{equation*}
 Now note that:
\begin{align*}
\sum_{k=1}^n F_{2k+1} &=
F_{2n+2} -1\\
\implies 1+ \sum_{k=1}^n F_{2k+1} &=
F_{2n+2} \\
\implies \sum_{k=1}^{n+1} F_{2k-1} &=
F_{2n+2}.
\end{align*}
Identity \eqref{lucasodd} now follows by setting $n\mapsto n-1$. 

Next,  set $u_k=F_{k+1}F_{k+2},$ and  $v_k=F_{k}F_{k+1}.$ Note that $w_k=F_{k+1}^2.$ Substituting in \eqref{eulerfinite2}, we obtain:
\begin{equation*}
\sum_{k=1}^n F_{k+1}^2/F_1F_2 =
F_{n+1}F_{n+2}/{F_{1}F_2} -1.
\end{equation*}
Since $F_1=1=F_2$, we  get 
\begin{equation*}
\sum_{k=1}^n F_{k+1}^2 =
F_{n+1}F_{n+2} -1.
\end{equation*}
 Now note that:
\begin{align*}
\sum_{k=1}^n F_{k+1}^2 &=
F_{n+1}F_{n+2} -1\\
\implies 1+ \sum_{k=1}^n  F_{k+1}^2  &=
F_{n+1}F_{n+2} \\
\implies \sum_{k=1}^{n+1} F_{k}^2 &=
F_{n+1}F_{n+2}. 
\end{align*}
Identity \eqref{lucas-square} now follows by setting $n\mapsto n-1$. 

Next,  to obtain \eqref{lucas-alternating} set $u_k=F_{k+1},$ and  $v_k=-F_{k}.$ Note that $w_k=F_{k+2}.$ Substituting in \eqref{eulerfinite2}, we obtain:
\begin{equation*}
\sum_{k=1}^n (-1)^k F_{k+2}/F_1 =
(-1)^nF_{n+1}/{F_{1}} -1.
\end{equation*}
Now use $F_1=1$, and set $n\mapsto n-1$, to obtain \eqref{lucas-alternating}. Here too we need to make calculations such as in the proof of \eqref{lucas-square}. 

Finally,  to obtain \eqref{lucas-div-2n} set $u_k=F_{k+2},$ and  $v_k=2F_{k+1}.$ It is easy to show that $w_k=-F_{k-1}.$ Substituting in \eqref{eulerfinite2}, we obtain:
\begin{equation*}
\sum_{k=1}^n (-1)\frac{F_{k-1}}{2^k F_2} =
\frac{F_{n+2}}{2^nF_{2}} -1.
\end{equation*}
Now use $F_2=1$, and multiply both sides by $-1$ to obtain \eqref{lucas-div-2n}. 
 \end{proof}

\section{Euler's Telescoping Lemma}\label{sec:telescoping-lemma}
In this section, we write Euler's identity to fit the form of most identities. Specifically, we re-write Euler's identity so that:
\begin{itemize}
\item the index of summation of the sum in \eqref{eulerfinite2} ranges from $k=0$ to $n$.
\item the summand is $1$ for $k=0$.
\end{itemize}

For the first item, we need to define products as follows:
\begin{equation}\label{proddef}
\prod_{j=k}^m A_j = \
\begin{cases}
A_kA_{k+1}\cdots A_m  & {\text{ if } m\geq k},\\
1 & { \text{ if } m= k-1},\\
(A_{m+1}A_{m+2}\cdots A_{k-1})^{-1}  &{\text{ if } m\leq k-2}.\\
\end{cases}
\end{equation}
\begin{Remark}
This definition is motivated by our desire to ensure that

\begin{equation}\label{prod-motivate}
\left( \prod_{j=k}^{m-1} A_j\right) \times A_m = \prod_{j=k}^m A_j
\end{equation}
The reader should verify that if \eqref{prod-motivate} holds, then 
$$\prod_{j=1}^0 A_j=1,$$
and 
$$\prod_{j=1}^{-1} A_j=\frac{1}{A_0}.$$
These are consistent with  definition \eqref{proddef}.
\end{Remark}

For the second item, that is, to ensure the summand becomes $1$ when the index of summation $k$ is $0$ we multiply both sides of \eqref{eulerfinite2} by $u_0/w_0$. We also have to make a minor modification to the RHS of \eqref{eulerfinite2}.  In this manner,  we obtain:
\begin{Theorem}[Telescoping Lemma (Euler)] Let $u_k$, $v_k$ and $w_k$ be three sequences, such that
$$u_k-v_k=w_k.$$ 
Then we have:
\begin{equation}\label{telescoping-lemma}
\sum_{k=0}^n \frac{w_k}{w_0}\frac{u_0u_1\cdots u_{k-1}}{v_1v_2\cdots v_{k}}
= \frac{u_0}{w_0}\left( \frac{u_1u_2\cdots u_{n}}{v_1v_2\cdots v_{n}} - \frac{v_0}{u_0}\right),
\end{equation}
provided none of the denominators in \eqref{telescoping-lemma} are zero.
\end{Theorem}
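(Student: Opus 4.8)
The plan is to derive \eqref{telescoping-lemma} from the finite identity \eqref{eulerfinite2} by carrying out precisely the two cosmetic changes flagged above: extending the range of summation down to $k=0$, and rescaling so that the $k=0$ term equals $1$. No fresh telescoping argument is required. The entire proof is a reindexing followed by one algebraic simplification in which the hypothesis $u_0-v_0=w_0$ is exactly what makes the two sides agree.

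First I would verify that the $k=0$ summand on the left of \eqref{telescoping-lemma} is in fact $1$. By the product convention \eqref{proddef}, the numerator $u_0u_1\cdots u_{k-1}$ and the denominator $v_1v_2\cdots v_k$ are both empty products equal to $1$ when $k=0$, so that term is $w_0/w_0=1$ (this is where we need $w_0\neq 0$, and the nonvanishing of the $v_j$ is needed throughout). I would then peel off this term and write the left-hand side as
\begin{equation*}
1+\sum_{k=1}^n \frac{w_k}{w_0}\,\frac{u_0u_1\cdots u_{k-1}}{v_1v_2\cdots v_k}.
\end{equation*}

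For $k\geq 1$ the factor $u_0/w_0$ is common to every term, so pulling it out leaves exactly the sum on the left of \eqref{eulerfinite2}. Applying that identity gives
\begin{equation*}
1+\frac{u_0}{w_0}\sum_{k=1}^n w_k\,\frac{u_1u_2\cdots u_{k-1}}{v_1v_2\cdots v_k}
= 1+\frac{u_0}{w_0}\left(\frac{u_1u_2\cdots u_n}{v_1v_2\cdots v_n}-1\right).
\end{equation*}

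It then remains only to match this against the stated right-hand side, and this is the one place the hypothesis enters. The expression above has the product term $\frac{u_0}{w_0}\frac{u_1\cdots u_n}{v_1\cdots v_n}$ together with a constant $1-\tfrac{u_0}{w_0}$, whereas the target \eqref{telescoping-lemma} carries the same product term together with the constant $-\frac{u_0}{w_0}\cdot\frac{v_0}{u_0}=-\frac{v_0}{w_0}$. The two constants coincide precisely when $1-\tfrac{u_0}{w_0}=-\tfrac{v_0}{w_0}$, i.e.\ when $w_0=u_0-v_0$, which is the defining relation among the three sequences. I expect the only real obstacle to be bookkeeping rather than mathematics: one must treat the $k=0$ boundary term correctly through \eqref{proddef} and track the scalar $u_0/w_0$ cleanly, after which \eqref{telescoping-lemma} falls out of \eqref{eulerfinite2}.
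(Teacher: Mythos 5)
Your proof is correct, but it takes a different route from the proof printed in the paper. You derive \eqref{telescoping-lemma} as a formal consequence of the earlier identity \eqref{eulerfinite2}: peel off the $k=0$ term (which equals $1$), factor $u_0/w_0$ out of the remaining sum, apply \eqref{eulerfinite2}, and reconcile the constant terms using $w_0=u_0-v_0$. The paper instead gives a fresh, self-contained telescoping computation: it rewrites the summand as $\frac{u_0}{w_0}\bigl(\prod_{j=1}^{k}\frac{u_j}{v_j}-\prod_{j=1}^{k-1}\frac{u_j}{v_j}\bigr)$ (this rewriting is where the hypothesis $u_k-v_k=w_k$ enters, uniformly for every $k$) and sums the collapsing differences over $k=0,\dots,n$, with the term $-v_0/u_0$ on the right emerging automatically from the convention \eqref{proddef}, since $\prod_{j=1}^{-1}(u_j/v_j)=v_0/u_0$. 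In fairness, your derivation is exactly the one the paper's prose promises just before the theorem (multiply both sides of \eqref{eulerfinite2} by $u_0/w_0$ and make a minor modification to the RHS), so you have made rigorous what the paper only sketches, and your version isolates precisely where the $k=0$ instance of the hypothesis is needed; the paper's proof buys compactness and independence from \eqref{eulerfinite2}, at the price of leaning on the extended product convention to handle the boundary term.
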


\begin{proof}
Observe that: 
\begin{eqnarray*}
\sum_{k=0}^n \frac{w_k}{w_0}\frac{u_0u_1\cdots u_{k-1}}{v_1v_2\cdots v_{k}}
&=&
\sum_{k=0}^n \frac{u_0}{w_0}\left( \prod_{j=1}^{k}\frac{u_j}{v_j} - \prod_{j=1}^{k-1}\frac{u_j}{v_j}
\right)
\\
&=& \frac{u_0}{w_0}\left( \frac{u_1u_2\cdots u_{n}}{v_1v_2\cdots v_{n}} - \frac{v_0}{u_0}\right),
\end{eqnarray*}
by telescoping.
\end{proof}
Our next example is the sum of a Geometric sequence. 
\begin{Example}[Geometric Sum] For $x\neq 1$, we have:
$$\displaystyle \sum_{k=0}^n x^k =  \frac{x^{n+1}-1}{x-1}.$$
\end{Example}
\begin{proof}
Take $u_k=x$, and $v_k=1$ in \eqref{telescoping-lemma}. Then $w_k=x-1=w_0.$ We obtain
\begin{equation}\label{mac-geom}
\sum_{k=0}^n \frac{(x-1)}{(x-1)}\frac{x^k}{1}
= \frac{x}{x-1}\left( \frac{x^{n}}{1} - \frac{1}{x}\right).
 \end{equation}
 This gives us, on simplification:
 \begin{equation*}
\sum_{k=0}^n x^k
= \frac{x^{n+1}-1}{x-1}.
 \end{equation*}
 \end{proof}

In the form \eqref{telescoping-lemma} the Telescoping Lemma was used by Macdonald to generalize some results of Chu \cite{chu} (see \cite{gb-milne} and  \S \ref{sec:generalized}).   In the form \eqref{eulerfinite}, Euler's identity has been attributed to Schl\"{o}milch~\cite[pp.~26-31]{sch} by Gould (see \cite{gould-quan}).  
Another formulation was given by  Ramanujan (see Berndt \cite[Entry 26, eq.~(26.1), p.~27]{berndt4}).  Spiridonov \cite{spiridonov} mentions an equivalent formulation that the referee indicated is \lq\lq the general construction of telescoping sums". Indeed, we shall soon find that \eqref{telescoping-lemma} is a characterization of telescoping sums. 

\section{Telescoping Sums}\label{sec:telescoping-charac}

We now show that any sum that telescopes is a special case of Euler's Telescoping Lemma.  This is easy to see. 
A telescoping sum is of the form
$$f(k+1)-f(k)=a_k.$$
If we sum from $k=0$ to $n$, we obtain, by telescoping,
\begin{equation}\label{tel1}
f(n+1)-f(0) =\sum_{k=0}^n a_k.
\end{equation}
To recover \eqref{tel1} from the Telescoping Lemma \eqref{telescoping-lemma}, set
\begin{align*}
u_k&=f(k+1)\\
\text{and }v_k&=f(k).
\end{align*}
Thus, we have: $w_k=u_k-v_k =a_k$. Note that
$$u_0u_1\cdots u_{k-1}=v_1v_2\cdots v_k=f(1)f(2)\cdots f(k),$$
and \eqref{telescoping-lemma} yields
\begin{align*}
\sum_{k=0}^n \frac{a_k}{a_0} &= 
\frac{f(1)}{a_0}\left(\frac{f(2)f(3)\cdots f(n+1)}{f(1)f(2)\cdots f(n)}-\frac{f(0)}{f(1)}\right)\\
&= \frac{1}{a_0}\left(f(n+1)-f(0)\right).
\end{align*}
Multiplying both sides by $a_0$ we obtain \eqref{tel1} as required. 
\begin{Remark}
Since every telescoping sum is a special case of the Telescoping Lemma, we can argue that the Telescoping Lemma is a characterization of telescoping identities. So if we know that a sum telescopes, we can be sure it is a special case of \eqref{telescoping-lemma}. The many examples in this paper should convince the reader that this characterization is quite useful  in practice. 
\end{Remark}

Our next example is about a product that seems to be made for telescoping.
\begin{Example} We have, for $m=0, 1, 2, \dots $
\begin{equation}\label{rising-fact}
\sum_{k=1}^n k(k+1)\cdots (k+m-1) = \frac{1}{m+1}\left( n(n+1)\cdots (n+m)\right).
\end{equation}
\end{Example}
\begin{proof}
We take
\begin{align*}
u_k&=(k+1)(k+2)\cdots (k+m+1), \text{ and}\\
v_k=u_{k-1}&=k(k+1)\cdots (k+m).
\end{align*}
Then note that
\begin{align*}
w_k& =u_k-v_k = (m+1)(k+1)(k+2)\cdots (k+m),\\
\text{ and } w_0 &=(m+1)m!=(m+1)! .\\
\end{align*}
Substituting in \eqref{telescoping-lemma} now gives us:
\begin{equation*}
\sum_{k=0}^n \frac{(k+1)(k+2)\cdots (k+m)}{m!} = \frac{(n+1)(n+2)\cdots (n+m+1)}{m!}.
\end{equation*}
Note  that $v_0=0$, so the second term on the RHS of \eqref{telescoping-lemma} is $0$. 

Now multiplying both sides by $m!$, we obtain:
\begin{equation*}
\sum_{k=0}^n (k+1)(k+2)\cdots (k+m) = \frac{(n+1)(n+2)\cdots (n+m+1)}{m+1}.
\end{equation*}
Finally, we write the sum from $k=1$ to $n+1$ by replacing $k$ by $k-1$ in each term of the LHS, and then replace $n$ by $n-1$ to obtain \eqref{rising-fact}.
\end{proof}
\begin{Remark}
Set $m=1$ in \eqref{rising-fact} to obtain
\begin{equation*}
\sum_{k=1}^n k=\frac{1}{2}(n(n+1)),
\end{equation*}
the famous formula for the sum of the first $n$ natural numbers. 
\end{Remark}

The products appearing in the sum above are called {\bf rising factorials}. We use the following notation for the rising factorials
$$(x)_m :=
\begin{cases}
1   &{\text{ if } m=0}, \\
x(x+1)\cdots (x+m-1) &{\text{ if }} m\geq 1.\\
\end{cases}
$$

In this notation, the sum \eqref{rising-fact} becomes
\begin{equation}\label{rfacid1}
\sum_{k=1}^n (k)_m = \frac{1}{m+1}(n)_{m+1}.
\end{equation}
Another notation (given by \cite{concretemath}) used for rising factorials is 
$$x^{\overline{m}}:=(x)_m .$$
In this notation, the identity is even more suggestive:
\begin{equation*}
\sum_{k=1}^n k^{\overline{m}}= \frac{1}{m+1}n^{\overline{m+1}}.
\end{equation*}

The reader may enjoy proving (using the Telescoping Lemma) a similar identity where the rising factorials come in the denominator:
\begin{equation}
\sum_{k=1}^n \frac{1}{k(k+1)\cdots (k+m)} =\frac{1}{m}
\left( \frac{1}{m!} - \frac{1}{(n+1)(n+2)\cdots (n+m)}\right),
\end{equation}
for $m=1, 2, 3, \dots.$  This identity (for $m=1$) is used to show that the series $$\sum_{k=1}^{\infty} \frac{1}{k(k+1)}$$ converges. 

The next identity, due to Ramanujan, 
appeared in van der Poorten's charming exposition  \cite{vanderpoorten}  of  
Ap{\' e}ry's proof of the irrationality of $\zeta(3)$.

\begin{Example}[Ramanujan] Let $n$ be a non-negative integer and let $x$ and $a_k$  be such that the denominators in \eqref{ram1} are not zero. Then we have: 
\begin{align}
\sum_{k=0}^n & \frac{a_1a_2\cdots a_k}
{(x+a_1)(x+a_2)\cdots (x+a_{k+1})}\nonumber \\
&= \frac{1}{x}- \frac{a_1a_2\cdots a_{n+1}}
{x(x+a_1)(x+a_2)\cdots (x+a_{n+1})}.\label{ram1}
\end{align}
\end{Example}
\begin{Remark} Identity \eqref{ram1} is Entry 25 in Volume 4 of Ramanujan's Notebooks edited by Berndt~\cite[p.~36]{berndt4}, where it is proved by induction. The next result, Entry 26 \cite[eq.~(26.1), p.~27]{berndt4} is equivalent to Euler's Telescoping Lemma \eqref{telescoping-lemma}. 
\end{Remark}
\begin{proof}
We take
\begin{align*}
u_k&=a_{k+1} \text{ and}\\
v_k&=x+a_{k+1}.
\end{align*}
Then note that
$w_k =u_k-v_k = -x=w_0.$
Substituting in \eqref{telescoping-lemma} now gives us:
\begin{align*}
\sum_{k=0}^n & \frac{a_1a_2\cdots a_k}
{(x+a_2)(x+a_3)\cdots (x+a_{k+1})}\nonumber \\
&= -\frac{a_1}{x}\cdot \frac{a_2\cdots a_{n+1}}
{(x+a_2)(x+a_3)\cdots (x+a_{n+1})} + \frac{(x+a_1)}{x}.
\end{align*}
Now divide both sides by $(x+a_1)$ to obtain Ramanujan's identity. 
\end{proof}

Some extensions of Ramanujan's results appear in \S \ref{sec:generalized}.

\section{The WZ Trick and the EZ method}\label{sec:WZ}

We know that all telescoping sums are special cases of Euler's Telescoping Lemma. So if we know that an identity telescopes, then we can try to prove it by finding $u$ and $v$ such that the sum becomes a special case of the Telescoping Lemma.  But how do we know that a sum telescopes? It turns out that the sum telescopes for a large number of identities, once we apply a small trick of Wilf and Zeilberger \cite{WZ90a},
see \cite[ch.~7]{AeqB} or \cite[p.~166]{aar}. We call this trick the WZ trick. It is an important step of the WZ method given by Wilf and Zeilberger, described in \cite[ch.~7]{AeqB}. 

Suppose we have to prove a terminating identity in the form:
\begin{equation}\label{identity}
\sum_{k=0}^n LHS(n,k) = RHS(n).
\end{equation}
By dividing both sides by $RHS(n)$ we get an identity of the form:
\begin{equation}\label{identity1}
\sum_{k=0}^n F(n,k) = 1.
\end{equation}
Assume that the sum terminates naturally, that is, $F(n,k)=0$ if $k>n$. Then we have:
\begin{equation*}
\sum_{k=0}^{n+1} (F(n+1,k)-F(n,k)) = 0.
\end{equation*}
In the WZ method, we try to write this sum as a telescoping sum in $k$. That is, we try to find $G(n,k)$ such that:
\begin{equation}\label{zeil1}
F(n+1,k)-F(n,k) = G(n,k)-G(n,k-1).
\end{equation}
Now if by summing both sides over $k$, we find that
$$\sum_{k} (F(n+1,k)-F(n,k)) =0,$$
then we have the result:
$$\sum_{k} F(n,k) ={\text{ constant}}.$$
Finally, to  prove the identity \eqref{identity}, we need to verify the identity \eqref{identity1} for $n=0$.

Our approach is to fit the LHS of \eqref{zeil1} into the Telescoping Lemma.  The approach to prove any identity of the form \eqref{identity} is as follows:
\subsubsection*{The EZ Method to Prove Identities}
\begin{enumerate}
\item[Step 1.] Divide both sides by $RHS(n)$ to obtain \eqref{identity1}.
\item[Step 2.] Compute $F(n+1,k)-F(n,k)$.
\item[Step 3.] Find $u_k$ and $v_k$ such that the difference in Step 2 is (a multiple of) the summand on the LHS of \eqref{telescoping-lemma}.
\item[Step 4.] Sum over $k$ using the Telescoping Lemma, and verify the sum is $0$. In all the examples below, this happens because $u_{n+1}=0$ and $v_0=0$.
\item[Step 5.] Step 4 shows that the sum in \eqref{identity1} is a constant. Verify that the sum is $1$ at $n=0$ to finish the proof. 
\end{enumerate}

The following example should help the reader understand the method. This is an identity that follows from the Binomial Theorem and is the simplest possible example that shows all the interesting features of this method. 
\begin{Example} Let $n=0, 1, 2, 3, \dots.$ Then we have:
\begin{equation}\label{bin-ex1}
\sum_{k=0}^n \frac{(-1)^k (-n)_k}{k!} = 2^n.
\end{equation}
\end{Example}
\begin{proof}
The first step is to divide both sides by $2^n$ to obtain
\begin{equation*}\label{bin-ex2}
\sum_{k=0}^n \frac{(-1)^k (-n)_k}{2^n k!}=1
\end{equation*}
Now let
$$F(n,k)=\frac{(-1)^k(-n)_k}{2^n k!}.$$
We compute $F(n+1,k)-F(n,k)$ to obtain:
\begin{align*}
F(n+1,k)-F(n,k) &= \frac{(-1)^k}{k!}\left( \frac{(-n-1)_k}{2^{n+1}} -\frac{(-n)_k}{2^{n}}\right)\\
&= \frac{(-1)^k}{2^{n+1}k!}\left( (-n-1)_k -2(-n)_k\right).
\end{align*}
Now note that
\begin{align*}
 (-n-1)_k -2(-n)_k &\\
 = (-n)(-n+1)&\cdots (-n+k-2)\left( -n-1-2(-n+k-1)\right)\\
&= (-1)\frac{n-2k+1}{n+1} (-n-1)_k.
\end{align*}
Thus we have:
\begin{equation*}
F(n+1,k)-F(n,k) = \frac{(-1)}{2^{n+1}}\left(  \frac{n-2k+1}{n+1} \frac{(-1)^k (-n-1)_k}{k!}\right).
\end{equation*}
We have now reached a stage where we can apply the Telescoping Lemma \eqref{telescoping-lemma}. 
We set
\begin{align*}
u_k&=(-1)(-n-1+k) = (n-k+1)\\
\text{and } v_k & = k
\end{align*}
We find that $w_k=u_k-v_k=n-2k+1$ and $w_0=n+1$. Thus we have, with $u_k$, $v_k$ and $w_k$ as above:
\begin{align*}
\sum_{k=0}^{n+1}\left( F(n+1,k)-F(n,k)\right) &= \frac{(-1)}{2^{n+1}}
\sum_{k=0}^{n+1} \frac{w_k}{w_0}\frac{u_0u_1\cdots u_{k-1}}{v_1v_2\cdots v_{k}}
\\
&= \frac{(-1)}{2^{n+1}}
\frac{u_0}{w_0}\left( \frac{u_1u_2\cdots u_{n+1}}{v_1v_2\cdots v_{n+1}} - \frac{v_0}{u_0}\right).
\end{align*}
Now note that
$u_{n+1}=0$ and $v_0=0$, so the RHS is $0$.
Thus we have 
\begin{equation*}
\sum_{k=0}^{n+1}\left( F(n+1,k)-F(n,k)\right) = 0.
\end{equation*}
Thus the sum 
$$\sum_{k=0}^n F(n,k)$$ is a constant. We now verify that the sum is $1$ for $n=0$, and so the constant is $1$.  This completes the proof of \eqref{bin-ex1}.
\end{proof}
\begin{Remark} 
Note that  the numerator of the summand in \eqref{bin-ex1} has the factor 
$$(-n)_k=(-n)(-n+1)\cdots (-n+k-1).$$ This factor makes the sum terminate naturally, that is, when $k>n$, the terms of the sum become $0$.
 
Corresponding to this factor,  we have a factor $u_k=-n-1+k$. Note that $u_{n+1}=0$ which makes the first term of the RHS of \eqref{telescoping-lemma} become $0$, when $k=n+1$. 

Similarly, note the factor $k!$ in the denominator of the summand in \eqref{bin-ex1}. If we view it as
$$\frac{1}{k!}=\frac{1}{\Gamma(k+1)}$$
then we see that this factor is $0$ when $k$ is a negative integer. It makes the sum terminate naturally from below.

Corresponding to this factor we have $v_k=k$. Thus $v_0=0$, and this ensures the second term of the RHS of  \eqref{telescoping-lemma} is $0$.

In many of our examples,  $u_k$ has the factor  $-n-1+k$ and $v_k$ the factor $k$. 
\end{Remark}
\begin{Remark}
When is it a good idea to apply \eqref{telescoping-lemma} directly, without using the WZ trick? We try \eqref{telescoping-lemma} directly when the sum does not have a factor that terminates the sum naturally. These kind of sums are called {\em indefinite} sums. The geometric sum is an example of such a sum. 
\end{Remark}

\section{Examples of Hypergeometric Identities}\label{sec:hyper}
In this section, we give more examples  from the theory of Hypergeometric series.  Hypergeometric series are of the form $\sum t_k$, where $t_{k+1}/t_k$ is a rational function of $k$. (The geometric sum is where this ratio is a constant.) Most special functions and binomial coefficient identities are examples of such series. All the identities here are proved using the EZ method described in \S\ref{sec:WZ}, by the WZ trick followed by an application of Euler's Telescoping Lemma. 

The reader will find it useful to compare the proofs of examples in this section with those in \S 3.11 and \S 3.12 of Andrews, Askey and Roy \cite{aar}.

The first example is:
\begin{Example}[The Binomial Theorem]\label{ex-binomial} Let $n$ be a non-negative integer. Then we have
\begin{equation}\label{binomial}
\sum_{k=0}^n {n \choose k} x^k =(1+x)^n.
\end{equation}
\end{Example}
\begin{proof}
We will show:
\begin{equation}\label{binomial1}
\sum_{k=0}^n \frac{1}{(1+x)^n}{n \choose k} x^k =1.
\end{equation}
Let 
\begin{align*}
F(n,k)&= \frac{x^k}{(1+x)^n}{n \choose k}\\
&=\frac{x^k}{(1+x)^n}\frac{n(n-1)(n-2)\cdots (n-k+1)}{k!}.
\end{align*}
We find that
\begin{align*}
F(n+1,k)-F(n,k) = \frac{(-1)x}{(1+x)^{n+1}}\\
\left( \frac{x(n-k+1)-k}{x(n+1)}\right. & 
\left.
\frac{x^k(n+1)n(n-1)\cdots (n-k+2)}{k!}\right).
\end{align*}
Next  we compare the expression in the brackets with the summand in \eqref{telescoping-lemma}. 
It is easy to see that the following will do the trick:
\begin{align*}
u_k&=x(n-k+1)\\
\text{and } v_k & = k.
\end{align*}
We find that $w_k=u_k-v_k=(x(n-k+1)-k)$ and $w_0=x(n+1)$. 
Note further that $u_{n+1}=0$ and $v_0=0$. 

Thus we have, with $u_k$, $v_k$ and $w_k$ as above:
\begin{align*}
\sum_{k=0}^{n+1}\left( F(n+1,k)-F(n,k)\right) &= \frac{(-1)x}{(1+x)^{n+1}}
\sum_{k=0}^{n+1} \frac{w_k}{w_0}\frac{u_0u_1\cdots u_{k-1}}{v_1v_2\cdots v_{k}}
\\
&= 0.
\end{align*}
The RHS is $0$ because $u_{n+1}=0$ and $v_0=0$.
Thus $$\sum_{k=0}^{n} F(n,k)$$
is a constant. To finish the proof, we verify that this sum is $1$ when $n=0$. 
\end{proof}
\begin{Remark}
One can write binomial coefficient identities using rising factorials. To rewrite \eqref{binomial}, note that
\begin{align*}
{n\choose k}&=\frac{n(n-1)(n-2)\cdots (n-k+1)}{k!}\\
&=(-1)^k\frac{(-n)(-n+1)(-n+2)\cdots (-n+k-1)}{k!}\\
&=(-1)^k\frac{(-n)_k}{k!}.
\end{align*}
Thus we can write \eqref{binomial} as
\begin{equation}\label{binomial2}
\sum_{k=0}^n  (-1)^k \frac{(-n)_k}{k!} x^k =(1+x)^n.
\end{equation}
 Set $x=1$ to recover \eqref{bin-ex1}.  
\end{Remark}

Our next example has two more parameters.
\begin{Example}[Chu (1303)--Vandermonde (1772)]\label{ex-chu-vandermonde}
Let $n$ be a non-negative integer and let $a$ and $b$ be such that the denominators in \eqref{chu-vandermonde} are not zero. Then we have:
\begin{equation}\label{chu-vandermonde}
\sum_{k=0}^n \frac{(a)_k}{(b)_k} \frac{(-n)_k}{k!} = \frac{(b-a)_n}{(b)_n}. 
\end{equation}
\end{Example}
\begin{Remark} The reader is referred to Andrews, Askey and Roy \cite[Cor.~2.2.3]{aar} for the history of the Chu--Vandermonde identity.
\end{Remark}
\begin{proof}
Again, by dividing by the RHS, we form an equivalent identity of the form
\begin{equation*}
\sum_{k=0}^n F(n,k) = 1,
\end{equation*}
where $F(n,k)$ is defined as:
\begin{equation*}
F(n,k)= \frac{(b)_n}{(b-a)_n}
 \frac{(a)_k}{(b)_k} \frac{(-n)_k}{k!}.
\end{equation*}
We find that
\begin{align*}
F(n+1,k)-F(n,k) = \frac{a(b)_n}{(b-a)_{n+1}}\\
\left( \frac{a(n-k+1)+k(b+n)}{a(n+1)}\right. & 
\left.
\frac{(a)_k(-n-1)_k}{(b)_k k!}\right).
\end{align*}
Next  we compare the expression in the brackets with the summand in \eqref{telescoping-lemma}. 
It is easy to see that the following will do the trick:
\begin{align*}
u_k&=(a+k)(-n-1+k)\\
\text{and } v_k & = k (b+k-1).
\end{align*}
We find that $w_k=u_k-v_k=(-1)(a(n-k+1)+k(b+n))$ and $w_0=-a(n+1)$. 
Note further that $u_{n+1}=0$ and $v_0=0$. 

Thus we have, with $u_k$, $v_k$ and $w_k$ as above:

\begin{align*}
\sum_{k=0}^{n+1}\left( F(n+1,k)-F(n,k)\right) &=  \frac{a(b)_n}{(b-a)_{n+1}}
\sum_{k=0}^{n+1} \frac{w_k}{w_0}\frac{u_0u_1\cdots u_{k-1}}{v_1v_2\cdots v_{k}}
\\
&= 0.
\end{align*}
Thus $$\sum_{k=0}^{n} F(n,k)$$
is a constant. To finish the proof, we verify that this sum is $1$ when $n=0$. 
\end{proof}

Our next example is a sum discovered independently by Pfaff (1797) and Saalsch\" utz (1890), see \cite[eq.~(1.7.2)]{grhyp}. 
\begin{Example}[Pfaff--Saalsch\" utz Theorem]\label{ex-pfaff-saalchutz} 
Let $n$ be a non-negative integer and let $a$, $b$ and $c$ be such that the denominators in \eqref{pfaff-saalchutz} are not zero. Then we have:
\begin{equation}\label{pfaff-saalchutz}
\sum_{k=0}^n \frac{(a)_k (b)_k}{(c)_k (1-n+a+b-c)_k } \frac{(-n)_k}{k!} = \frac{(c-a)_n (c-b)_n}
{(c)_n(c-a-b)_n}. 
\end{equation}
\end{Example}
\begin{Remark} The reader is referred to Andrews, Askey and Roy \cite[eq.~(2.2.8)]{aar} for historical remarks regarding the Pfaff--Saalsch\" utz Theorem. See Andrews \cite{and-pfaff2} for Pfaff's own (and perhaps the simplest) proof of \eqref{pfaff-saalchutz}.
\end{Remark}
\begin{proof}
Again, by dividing by the RHS, we form an equivalent identity of the form
\begin{equation*}
\sum_{k=0}^n F(n,k) = 1,
\end{equation*}
where $F(n,k)$ is defined as:
\begin{equation*}
F(n,k)= \frac{(c)_n(c-a-b)_n}{(c-a)_n(c-b)_n}
 \frac{(a)_k(b)_k}{(c)_k(1-n+a+b-c)_k} \frac{(-n)_k}{k!}.
\end{equation*}

We find that
\begin{align*}
F(n+1,k)-F(n,k) = &\frac{(c)_n(c-a-b)_n}{(c-a)_{n+1}(c-b)_{n+1}}\\
&\times \left( \frac{(a)_k(b)_k(-n)_{k-1}}{(c)_k   (1-n+a+b-c)_k k!}\right)\\
&\times \left( (c+n) (-n+k+a+b-c)(n+1)\right.\\
&\left. -(c-a+n)(c-b+n)(-n+k-1)  \right)\\
&= (-1)ab\frac{(c)_n(c-a-b)_n}{(c-a)_{n+1}(c-b)_{n+1}}\\
&\times \left( \frac{(a)_k(b)_k(-n-1)_{k}}{(c)_k (1-n+a+b-c)_k  k! }\right)\\
&\times \left(\frac{(c+n)(a+b-c+1)k +ab(n-k+1)}{ab(n+1)}\right).
\end{align*}
Next  we compare this expression with the summand in \eqref{telescoping-lemma}. 
It is easy to see that the following will do the trick:
\begin{align*}
u_k&=(a+k)(b+k)(-n-1+k)\\
\text{and } v_k & = k (c+k-1)(-n+k+a+b-c).
\end{align*}
We find that 
\begin{align*}
w_k=&
(a+k)(b+k)(-n-1+k)-k (c+k-1)(-n+k+a+b-c)\\
&= -\left((c+n)(a+b-c+1)k +ab(n-k+1)\right)
\end{align*}
 and $w_0=-ab(n+1)$. 
Note further that $u_{n+1}=0$ and $v_0=0$. 

Thus we have, with $u_k$, $v_k$ and $w_k$ as above:

\begin{align*}
\sum_{k=0}^{n+1}\left( F(n+1,k)-F(n,k)\right) &=  
(-1)ab\frac{(c)_n(c-a-b)_n}{(c-a)_{n+1}(c-b)_{n+1}} \\
&\times
\sum_{k=0}^{n+1} \frac{w_k}{w_0}\frac{u_0u_1\cdots u_{k-1}}{v_1v_2\cdots v_{k}}
\\
&= 0.
\end{align*}
Thus $$\sum_{k=0}^{n} F(n,k)$$
is a constant. Now verify that this sum is $1$ when $n=0$. 
This finishes the proof of the Pfaff--Saalsch\" {u}tz identity.
\end{proof}

\begin{Remark}
The only difficult part of our proof of the Pfaff--Saalsch\" utz summation is the algebra required to prove that:
\begin{align*}
&(c+n) (-n+k+a+b-c)(n+1)
 -(c-a+n)(c-b+n)(-n+k-1)\\
&=
(c+n)(a+b-c+1)k +ab(n-k+1)\\
&=
-\left( (a+k)(b+k)(-n-1+k)-k (c+k-1)(-n+k+a+b-c)\right). \\
\end{align*}
The first equality is required to simplify $F(n+1,k)-F(n,k)$; the second to find an expression for $w_k=u_k-v_k.$

Compare these with the corresponding calculations from the proof of the Chu--Vander\-monde Identity:
\begin{align*}
&(b+n) (n+1)
 +(b-a+n)(-n+k-1)\\
&=
\left((b+n)k +a(n-k+1)\right)\\
&=
\left( (a+k)(-n-1+k)-k(b+k-1)\right). \\
\end{align*}
The calculations in the proof of the Binomial Theorem are even simpler. 
 Upon examining these expressions it is apparent that there is a natural hierarchy both in the identities and their proofs. See also our work in \S \ref{sec:dougall}   that sheds some light on these calculations. 
\end{Remark}

\begin{Remark}
For a large number of identities, the WZ method produces a \lq\lq certificate" $R(n,k)$ such that
$G(n,k)=R(n,k)F(n,k)$, where $G(n,k)$ satisfies \eqref{zeil1}. In our case, we directly produce $G(n,k)$ by appealing to the Telescoping Lemma. 

Note that in all the examples, we ended up with an expression of the form
$$F(n+1,k)-F(n,k)=g(n)\left( 
\frac{w_k}{w_0}\frac{u_0u_1\cdots u_{k-1}}{v_1v_2\cdots v_{k}}\right),
$$
where $w_k=u_k-v_k$. 
Thus we have:
$$G(n,k)= g(n) \frac{u_0}{u_0-v_0} \frac{u_1u_2\cdots u_{k}}{v_1v_2\cdots v_{k}}.$$
The WZ methodology is more general than the EZ method---it works even when a relation of the form \eqref{zeil1} does not apply, when the LHS of \eqref{zeil1} is more complicated. 
\end{Remark}

The proofs given in this section should be compared with corresponding proofs by Pfaff's method described by Andrews \cite{and-pfaff2} and Andrews, Askey and Roy \cite[\S 3.11--\S 3.12]{aar}. In Pfaff's method, one does not divide by  $RHS(n)$ in \eqref{identity}.  However, for many examples, computing $F(n+1,k)-F(n,k)$ leads to a three term recurrence relation that determines the sum. To complete the proof, we show that the product side too satisfies the same relation. When this procedure works, then the calculations are even simpler than the corresponding calculations of the EZ method. The reader may also consult Guo and Zeng \cite{guo-zeng} for a similar method.

\section{Examples from $\boldsymbol q$-series}\label{sec:q-series}

The examples we consider in this section are the $q$-analogs of the corresponding sums in \S\ref{sec:hyper}. The proofs are analogous to those in the Hypergeometric case, and follow the EZ method outlined in \S\ref{sec:WZ}. 

We define the $q$-rising factorial (for $q$ a complex number) as the product:
$$\qrfac{a}{m} :=
\begin{cases}
1 &{\text{ if } m=0},\\
(1-a)(1-aq)\cdots (1-aq^{m-1}) &{\text{ if }} m\geq 1.\\
\end{cases}
$$
The limit
$$\lim_{q\to 1}\frac{1-q^A}{1-q}=A,$$
implies:
$$\lim_{q\to 1}\frac{\qrfac{q^a}{m}}{(1-q)^m}=(a)_m.$$
This motivates the use of the term \lq $q$-analog\rq\  for these sums. 

\begin{Example}[The terminating $q$-binomial sum] Let $n$ be a non-negative integer and $q$ a complex number such that the denominator in \eqref{qbinomial1} is not zero. Then we have: 
\begin{equation}\label{qbinomial1}
\sum_{k=0}^n  \frac{\qrfac{q^{-n}}{k}}{\qrfac{q}{k}} (zq^n)^k =\qrfac{z}{n}.
\end{equation}
\end{Example}
\begin{Remark} The terminating $q$-binomial theorem may be found in Gasper and Rahman \cite[eq.~(II.4)]{grhyp} where we take $z\mapsto zq^n$.  If we take the limit as $q\to 1$ in \eqref{qbinomial1}, and set $z\mapsto -x$, we obtain the Binomial Theorem \eqref{binomial2}.  
\end{Remark}
\begin{proof}
By dividing by the RHS, we form an equivalent identity of the form
\begin{equation*}
\sum_{k=0}^n F(n,k) = 1,
\end{equation*}
where $F(n,k)$ is defined as:
\begin{equation*}
F(n,k)= \frac{1}{\qrfac{z}{n}}\frac{\qrfac{q^{-n}}{k}}{\qrfac{q}{k}}
\left(zq^n\right)^k.
\end{equation*}
We find that
\begin{align*}
F(n+1,k)-F(n,k) &= \frac{z^kq^{nk}}{\qrfac{q}{k}\qrfac{z}{n+1}}
\frac{\qrfac{q^{-n-1}}{k}}{1-q^{-n-1}}
\\
&\left( \left(1-q^{-n-1}\right)q^k
- \left(1-q^{-n+k-1}\right)(1-zq^n)\right)\\
&= \frac{zq^n}{\qrfac{z}{n+1}}
\frac{z^{k}q^{nk}\qrfac{q^{-n-1}}{k}}{\qrfac{q}{k}}
\\
&
\left(\frac{ 
 zq^n\left(1-q^{-n+k-1}\right)
-\left(1-q^{k}\right)}
{zq^n\left(1-q^{-n-1}\right)}
\right)
.\\
\end{align*}
Next  we compare the expression  above with the summand in \eqref{telescoping-lemma}. 
It is easy to see that the following will do the trick:
\begin{align*}
u_k&=zq^n\left( 1-q^{-n-1+k}\right)\\
\text{and } v_k & = 1-q^k.
\end{align*}
We find that 
\begin{align*}
w_k=u_k-v_k &=
 zq^n\left(1-q^{-n+k-1}\right)
-\left(1-q^{k}\right)\\
\text{and }
w_0 &=zq^n\left(1-q^{-n-1}\right).
\end{align*} 
Note further that $u_{n+1}=0$ and $v_0=0$. 

Thus we have, with $u_k$, $v_k$ and $w_k$ as above:
\begin{align*}
\sum_{k=0}^{n+1}\left( F(n+1,k)-F(n,k)\right) &= \frac{zq^n}{\qrfac{z}{n+1}}
\sum_{k=0}^{n+1} \frac{w_k}{w_0}\frac{u_0u_1\cdots u_{k-1}}{v_1v_2\cdots v_{k}}
\\
&= 0.
\end{align*}
Thus $$\sum_{k=0}^{n} F(n,k)$$
is a constant. To finish the proof, we verify that this sum is $1$ when $n=0$. 
\end{proof}
\begin{Remark}
Compare the statement of the $q$-Binomial Theorem with 
\eqref{binomial2}. The proof is analogous to that of Example \ref{ex-binomial}.
\end{Remark}
\begin{Remark}
Note that $u_k$ has the factor $\left(1-q^{-n+k-1}\right)$ which corresponds to the factor $-n+k-1$ in our examples in the previous section. This makes $u_{n+1}=0$. Similarly, $v_k$ has the factor $\left(1-q^{k}\right)$, that corresponds to the $k$ and we have $v_0=0$. 
\end{Remark}

\begin{Example}[A $q$-Chu-Vandermonde sum] Let $n$ be a non-negative integer and let $q$, $a$ and $b$ be such that the denominators in \eqref{qchu-vandermonde1} are not zero. Then we have:
\begin{equation}\label{qchu-vandermonde1}
\sum_{k=0}^n  \frac{\qrfac{a}{k}\qrfac{q^{-n}}{k}}{\qrfac{b}{k} \qrfac{q}{k}} \left(\frac{bq^n}{a}\right)^k =
\frac{\qrfac{b/a}{n}}{\qrfac{b}{n}}.
\end{equation}
\end{Example}
\begin{Remark} Identity \eqref{qchu-vandermonde1} is one of the two $q$-analogs of the Chu--Vandermonde identity \eqref{chu-vandermonde}. See Gasper and Rahman \cite[eq. (1.5.2)]{grhyp}.
\end{Remark}
\begin{proof}
By dividing by the RHS, we form an equivalent identity of the form
\begin{equation*}
\sum_{k=0}^n F(n,k) = 1,
\end{equation*}
where $F(n,k)$ is defined as:
\begin{equation*}
F(n,k)= \frac{\qrfac{b}{n}}{\qrfac{b/a}{n}}\frac{\qrfac{a}{k}\qrfac{q^{-n}}{k}}{\qrfac{b}{k}\qrfac{q}{k}}
\left(\frac{bq^n}{a}\right)^k.
\end{equation*}
We find that
\begin{align*}
F(n+1,k)-&F(n,k) = 
\frac{\qrfac{a}{k}}{\qrfac{b}{k}\qrfac{q}{k}}\left(\frac{b}{a}\right)^k \cdot
\frac{\qrfac{b}{n}}{\qrfac{b/a}{n+1}}
\frac{\qrfac{q^{-n-1}}{k}}{1-q^{-n-1}}q^{nk}
\\
&\left( \left( 1-bq^n\right)\left(1-q^{-n-1}\right)q^k
- \left( 1-bq^n/a\right) \left(1-q^{-n+k-1}\right)\right)\\
&=
\frac{b(1-a)q^n\qrfac{b}{n}}{a\qrfac{b/a}{n+1}}\cdot
\frac{\qrfac{a}{k}\qrfac{q^{-n-1}}{k}}{\qrfac{b}{k}\qrfac{q}{k}}\left(\frac{bq^n}{a}\right)^k
\\
&
\left(\frac{ 
  \frac{b}{a}(1-a)q^n  \left(1-q^{-n+k-1}\right)
-\left(1-bq^n\right)\left(1-q^{k}\right)}
{\frac{b}{a}(1-a)q^n\left(1-q^{-n-1}\right)}
\right)
.\\
\end{align*}
Next  we compare the expression  above with the summand in \eqref{telescoping-lemma}. 
It is easy to see that the following will do the trick:
\begin{align*}
u_k&=\left(1-aq^k\right)\left( 1-q^{-n-1+k}\right)bq^n/a\\
\text{and } v_k & = \left(1-bq^{k-1}\right)\left(1-q^k\right).
\end{align*}
We find that 
\begin{align*}
w_k=u_k-v_k &=
 \left(1-aq^k\right)\left( 1-q^{-n-1+k}\right)bq^n/a
 -
  \left(1-bq^{k-1}\right)\left(1-q^k\right)\\
  &=
  \frac{b}{a}(1-a)q^n  \left(1-q^{-n+k-1}\right)
-\left(1-bq^n\right)\left(1-q^{k}\right),
   \\
\text{and }
w_0 &=\frac{b}{a}(1-a)q^n\left(1-q^{-n-1}\right).
\end{align*} 
Note further that $u_{n+1}=0$ and $v_0=0$. 

Thus we have, with $u_k$, $v_k$ and $w_k$ as above:
\begin{align*}
\sum_{k=0}^{n+1}\left( F(n+1,k)-F(n,k)\right) &= 
\frac{b(1-a)q^n\qrfac{b}{n}}{a\qrfac{b/a}{n+1}}
\sum_{k=0}^{n+1} \frac{w_k}{w_0}\frac{u_0u_1\cdots u_{k-1}}{v_1v_2\cdots v_{k}}
\\
&= 0.
\end{align*}
Thus $$\sum_{k=0}^{n} F(n,k)$$
is a constant. To finish the proof, we verify that this sum is $1$ when $n=0$. 
\end{proof}

\begin{Example}[The $q$-Pfaff--Saalsch\" utz sum (Jackson (1910))] Let $n$ be a non-negative integer and let $q$, $a$,  $b$ and $c$ be such that the denominators in \eqref{qpfaff-saalchutz1} are not zero. Then we have:
\begin{equation}\label{qpfaff-saalchutz1}
\sum_{k=0}^n  \frac{\qrfac{a}{k}\qrfac{b}{k}\qrfac{q^{-n}}{k}}{\qrfac{c}{k}\qrfac{abq^{1-n}/c}{k} \qrfac{q}{k}} q^k =
\frac{\qrfac{c/a}{n}\qrfac{c/b}{n}}{\qrfac{c}{n}\qrfac{c/ab}{n}}.
\end{equation}
\end{Example}
\begin{Remark} Identity \eqref{qpfaff-saalchutz1} is the $q$-analog of the Pfaff--Saalsch\" utz sum  \eqref{pfaff-saalchutz}, see Gasper and Rahman \cite[eq. (1.7.2)]{grhyp}.
\end{Remark}
\begin{proof}
By dividing by the RHS, we form an equivalent identity of the form
\begin{equation*}
\sum_{k=0}^n F(n,k) = 1,
\end{equation*}
where $F(n,k)$ is defined as:
\begin{equation*}
F(n,k)= \frac{\qrfac{c}{n}\qrfac{c/ab}{n}}{\qrfac{c/a}{n}\qrfac{c/b}{n}}
\frac{\qrfac{a}{k}\qrfac{b}{k}\qrfac{q^{-n}}{k}}{\qrfac{c}{k}\qrfac{abq^{1-n}/c}{k}\qrfac{q}{k}}
q^k.
\end{equation*}
We find that
\begin{align*}
F(n+1,k)-F(n,k) &= 
\frac{\qrfac{a}{k}\qrfac{b}{k}}{\qrfac{c}{k}\qrfac{q}{k}}q^k \cdot
\frac{\qrfac{c}{n}\qrfac{c/ab}{n}}{\qrfac{c/a}{n+1}\qrfac{c/b}{n+1}}
\\
\times &\frac{\qrfac{q^{-n-1}}{k}}{\left(1-q^{-n-1}\right)}
\frac{1}{\qrfac{abq^{1-n}/c}{k}}(-1)
\\
&\left( \left( 1-cq^n\right)\left(1-abq^{-n+k}/c\right) \left(1-q^{-n-1}\right)\left( cq^n/ab\right)
\right.
\\
&+ \left. \left( 1-cq^n/a\right)\left( 1-cq^n/b\right) \left(1-q^{-n+k-1}\right)\right).
\end{align*}
We can show that the expression in the brackets equals:
\begin{align*}
&\left( \left( 1-cq^n\right) \left(1-abq^{-n+k}/c\right) \left(1-q^{-n-1}\right)
\left( cq^n/ab\right)
\right.
\\
&+ \left. \left( 1-cq^n/a\right)\left( 1-cq^n/b\right) \left(1-q^{-n+k-1}\right)\right)\\
&=\left(
\left(1-cq^n\right)\left( 1-c/abq \right) \left(1-q^{k}\right) \right.\\
&+ \left. \frac{cq^n}{ab}(1-a)(1-b) \left(1-q^{-n+k-1}\right) 
\right) 
.
\end{align*}

Next  we compare the expression  above with the summand in \eqref{telescoping-lemma}. We take:
\begin{align*}
u_k&=\left(1-aq^k\right)\left(1-bq^k\right)\left( 1-q^{-n-1+k}\right)\\
\text{and } v_k & = \left(1-cq^{k-1}\right)\left(1-abq^{-n+k}/c\right)\left(1-q^k\right).
\end{align*}
We can show that:
\begin{align*}
w_k=u_k-v_k &=
 \left(abq^{-n+k}/c\right)
 \left[\walker
\left(1-cq^n\right)\left( 1-c/abq \right) \left(1-q^{k}\right) \right.\\
&+ \left. \frac{cq^n}{ab}(1-a)(1-b) \left(1-q^{-n+k-1}\right) 
\walker \right],
   \\
\text{and }
w_0 &=(1-a)(1-b)\left(1-q^{-n-1}\right).
\end{align*} 
Note further that $u_{n+1}=0$ and $v_0=0$. 

Thus we have, with $u_k$, $v_k$ and $w_k$ as above:
\begin{align*}
\sum_{k=0}^{n+1}\left( F(n+1,k)-F(n,k)\right) &= 
\frac{(-1)c(1-a)(1-b)q^n\qrfac{c}{n}\qrfac{c/ab}{n}}{ab\qrfac{c/a}{n+1}\qrfac{c/b}{n+1}}\\
&\times 
\sum_{k=0}^{n+1} \frac{w_k}{w_0}\frac{u_0u_1\cdots u_{k-1}}{v_1v_2\cdots v_{k}}
\\
&= 0.
\end{align*}
Thus $$\sum_{k=0}^{n} F(n,k)$$
is a constant. To finish the proof, we verify that this sum is $1$ when $n=0$. 
\end{proof}

As we have seen, the proofs of the $q$-analogs presented above are analogous to those of the classical identities presented in \S\ref{sec:hyper}. In the next section, we prove the $q$-Dougall sum, a sum that encapsulates all the identities of this section. 

\section{Jackson's $\boldsymbol q$-analog of Dougall's Sum}\label{sec:dougall}

The algebra involved in the proof of the $q$-Pfaff-Saalsch\" utz sum is complicated enough to discourage us from trying to prove more complicated identities. Fortunately, there is an elementary identity that takes care of the algebra. In this section, we use a similar idea to prove a much more complicated identity,  namely the $q$-Dougall sum. The proof is again by the EZ method outlined in \S\ref{sec:WZ}. 

Ekhad and Zeilberger \cite{ekhzeil1} had earlier given a \lq\lq 21st century proof'' of Dou\-gall's sum.  Their work is important because the Dougall summation is a very general summation, and special cases include all the fundamental summation theorems in the theory of Hypergeometric Series. Likewise, if there is only one summation theorem that we can prove, then the $q$-Dougall summation is the one. This theorem encapsulates many of the other summation theorems---terminating and non-terminating---that comprise the theory of Basic Hypergeometric Series. Further, by suitably modifying the parameters and taking limits as $q\to 1$, one obtains all the main Hypergeometric sum identities too.  The reader may consult \S 2.7 of Gasper and Rahman~\cite{grhyp} to learn how the $q$-Dougall summation is specialized to obtain the key summations formulas in the theory of Basic Hypergeometric Series. 

At this time, its a good idea for the  reader to understand the notations used to write and describe $q$-series. While these notations are not strictly necessary to understand what follows, they are needed to understand how we obtain the elementary identities required for our work. These are special cases of identities from \cite{grhyp}. 

Basic hypergeometric series, or $q$-hypergeometric series, 
with $r$ numerator parameters $a_1$, $\ldots$, $a_r$ and
$s$ denominator parameters $b_1$, $\ldots$, $b_s$, 
and with base $q$ are defined as
\begin{align}\label{defhyp}
_r\phi_s\,\left[\begin{matrix}a_1,\dots,a_r\\
b_1,\dots,b_s\end{matrix};q,z\right]:=
\suml _{k=0} ^{\infty}\frac {(a_1;q)_k(a_2;q)_k \cdots (a_r;q)_k}
{(q;q)_k(b_1;q)_k \cdots (b_s;q)_k}\,
\left[(-1)^k q^{k\choose 2}\right]^{1+s-r}z^k,
\end{align}
with ${k\choose 2}=k(k-1)/2$, where $q\neq 0$ when $r>s+1$. 

For example, the $q$-Pfaff--Saalsch\" utz sum \eqref{qpfaff-saalchutz1} can be written as:
\begin{equation*}
_3\phi_2\left[
\begin{matrix}{a,  b, q^{-n}} \\
 {c, abq^{1-n}/c}
 \end{matrix}
\; ; q,\; q\right]
= \frac{\qrfac {c/ a}{\!n} \qrfac {c/b}{\!n}}
{\qrfac {c/ ab}{\!n} \qrfac {c}n }.
\end{equation*}

The $_3\phi_2$ series here is an example of a  {\em balanced} series. 
The term ``balanced'' refers to a condition
 which appears frequently in summation and
transformation formulas. 

An $_{r+1}\phi_r$ series is called $k$-balanced if in
 \eqref{defhyp},
$b_1\cdots b_r = a_1\cdots a_{r+1}q^k$ and $z=q$, and a $1$-balanced
series is called balanced.

Another condition that appears frequently in dealing with series is the very-well-poised condition.

An  $_{r+1}\phi_r$ 
series is well-poised if in \eqref{defhyp},
$qa_1=a_2b_1=\cdots = a_{r+1}b_r$. It is called very-well-poised if
it is well-poised and if $a_2=q{{\sqrt{a_1}}}$ and 
$a_3 = -q{{\sqrt{a_1}}}$. 

The objective of this section is to prove a summation theorem for a balanced, very-well-poised $_8\phi_7$ series found
by  Jackson
\cite[Equation~(2.6.2)]{grhyp}:
\begin{align}\label{class87gl}\allowdisplaybreaks\displaybreak[0]
_8\phi_7\,&\left[\begin{matrix}a,\,q\sqrt{a},-q\sqrt{a},b,c,d,
a^2q^{n+1}/bcd,q^{-n}\\
\sqrt{a},-\sqrt{a},aq/b,aq/c,aq/d,
bcdq^{-n}/a,aq^{n+1}\end{matrix};q,q\right]\cr
&=\frac {(aq;q)_n\,(aq/bc;q)_n\,(aq/bd;q)_n\,(aq/cd;q)_n}
{(aq/b;q)_n\,(aq/c;q)_n\,(aq/d;q)_n\,(aq/bcd;q)_n}.
\end{align}

First, we examine our proof of the $q$-Pfaff--Saalsch\" utz formula. Note that the tough part of the proof of \eqref{qpfaff-saalchutz1} is to show that:
\begin{align*}
&\left( \left( 1-cq^n\right) \left(1-abq^{-n+k}/c\right) \left(1-q^{-n-1}\right)
\left( cq^n/ab\right)
\right.
\\
&+ \left. \left( 1-cq^n/a\right)\left( 1-cq^n/b\right) \left(1-q^{-n+k-1}\right)\right)\\
&=\left(
\left(1-cq^n\right)\left( 1-c/abq \right) \left(1-q^{k}\right) \right.\\
&+ \left. \frac{cq^n}{ab}(1-a)(1-b) \left(1-q^{-n+k-1}\right) 
\right) 
\\
&=
\left(\frac{c}{ab}q^{n-k}\right)
\left[ \left(1-aq^k\right)\left(1-bq^k\right)\left( 1-q^{-n-1+k}\right)\right. \\
&-\left. \left(1-cq^{k-1}\right)\left(1-abq^{-n+k}/c\right)\left(1-q^k\right)\right].
\end{align*}

The first equality is required to simplify $F(n+1,k)-F(n,k)$, and the second to find an expression for $w_k=u_k-v_k.$ Let us dispense with the middle step, that we wrote for aesthetic reasons. We find that we have to prove that:

\begin{align}
 \left(1-aq^k\right)&\left(1-bq^k\right)\left( 1-q^{-n-1+k}\right) \nonumber \\
&- \left(1-cq^{k-1}\right)\left(1-abq^{-n+k}/c\right)\left(1-q^k\right)  \nonumber \\
=
 \left(abq^{-n+k}/c\right)&
\left[ \left( 1-cq^n\right) \left(1-abq^{-n+k}/c\right) \left(1-q^{-n-1}\right)
\left( cq^n/ab\right)
\right. \nonumber 
\\
&+ \left. \left( 1-cq^n/a\right)\left( 1-cq^n/b\right) \left(1-q^{-n+k-1}\right)\right].
\label{sears1}
\end{align}
The left hand side of \eqref{sears1} can be written as:

\begin{align*}
 \left(1-aq^k\right)&\left(1-bq^k\right)\left( 1-q^{-n-1+k}\right)\\
& \times
 \left[1 - 
 \frac{
\left(1-cq^{k-1}\right)\left(1-abq^{-n+k}/c\right)\left(1-q^k\right)}
 {\left(1-aq^k\right)\left(1-bq^k\right)\left( 1-q^{-n-1+k}\right)}
\right].
\end{align*}
The two terms in the brackets can be transformed by the $a\mapsto abq^{-n+k}/c$, $b\mapsto cq^{k-1}$, $c\mapsto q^k$, $d\mapsto q^{-n+k-1}$,  $e\mapsto aq^k$, and $f\mapsto bq^k$ case of the elementary relation:
\begin{align}
1-\frac{(1-a)(1-b)(1-c)}{(1-d)(1-e)(1-f)} &=
\frac{(1-e/a)(1-f/a)}{(1-e)(1-f)} a\nonumber \\
&\times \left[
1-\frac{(1-a)(1-d/b)(1-d/c)}{(1-d)(1-a/e)(1-a/f)}
\right], \label{searsn1}
\end{align}
where $abc=def$.
On using this special case of \eqref{searsn1}, we immediately obtain the RHS of \eqref{sears1}.

The elementary identity \eqref{searsn1} is the $n=1$ case of the well-known Sears' $_4\phi_3$ transformation formula \cite[eq.~(2.10.4)]{grhyp}, that transforms a terminating, balanced, 
$_4\phi_3$ series into a multiple of another such series.

Our proof of Jackson's formula \eqref{class87gl} relies upon the $n=1$ case  another well-known transformation formula, namely  the $_{10}\phi_9$ transformation formula \cite[eq. (2.9.1)]{grhyp}, that transforms a very-well-poised and balanced $_{10}\phi_9$ series into a multiple of another such series. We can take $n=1$ and $a\mapsto a/q$ in 
\cite[eq. (2.9.1)]{grhyp} to obtain the following elementary identity:
\begin{align}
&1-\frac{(1-b)(1-c)(1-d)(1-e)(1-f)(1-a^3/bcdef)}
{(1-a/b)(1-a/c)(1-a/d)(1-a/e)(1-a/f)(1-bcdef/a^2)}\nonumber \\
 &=
\frac{(1-a)(1-a/ef)(1-a^2/bcde)(1-a^2/bcdf)}
{(1-a/e)(1-a/f)(1-a^2/bcd)(1-a^2/bcdef)} \nonumber \\
&\times \left[
1-\frac{(1-a/bc)(1-a/bd)(1-a/cd)(1-e)(1-f)(1-a^3/bcdef)}
{(1-a/b)(1-a/c)(1-a/d)(1-a^2/bcde)(1-a^2/bcdf)(1-ef/a)}
\right]. \label{10p9n1}
\end{align}
For our proof of Jackson's sum, we need to iterate this elementary result. That is, we need to take the two terms in the bracket on the RHS of \eqref{10p9n1}, and make them fit the LHS, and once again apply \eqref{10p9n1}. 
For this we use the following case of \eqref{10p9n1}:
$a\mapsto a^2/bcd$, $b\mapsto e$, $c\mapsto f$, $d\mapsto a/bc$,  $e\mapsto a/cd$, and $f\mapsto a/bd$. 

In this manner, we obtain the elementary relation:
\begin{align}
&1-\frac{(1-b)(1-c)(1-d)(1-e)(1-f)(1-a^3/bcdef)}
{(1-a/b)(1-a/c)(1-a/d)(1-a/e)(1-a/f)(1-bcdef/a^2)}\nonumber \\
 &=
\frac{(1-a)(1-d)(1-a^2/bcde)(1-a^2/bcdf)(1-a^2/bdef)(1-a^2/cdef)}
{(1-a/b)(1-a/c)(1-a/e)(1-a/f)(1-a^2/bcdef)(1-a^3/bcd^2ef)} \nonumber \\
&\times \left[1-
\frac{(1-a/bd)(1-a/cd)(1-a/de)(1-a/df)(1-\frac{a^2}{bcdef})(1-\frac{a^3}{bcdef})}
{(1-1/d)(1-a/d)(1-\frac{a^2}{bcde})(1-\frac{a^2}{bcdf})(1-\frac{a^2}{bdef})(1-\frac{a^2}{cdef})}
\right]. \label{10p9n1-2}
\end{align}

A motive for obtaining \eqref{10p9n1-2} is that only one term in the numerator of the fraction on the LHS of \eqref{10p9n1-2} is repeated in the RHS. The same comment applies to the denominator. This is also a feature of \eqref{searsn1}.  

We are now ready to prove the $q$-Dougall summation. 

\begin{Example}[The $q$-Dougall sum (Jackson (1921))]Let $n$ be a non-negative integer and let $q$, $a$, $b$, $c$, and $d$ be such that the denominators in \eqref{qdougall2} are not zero. Then we have:
\begin{align}
\sum_{k=0}^n &
\frac{1-aq^{2k}}{1-a}
\frac{\qrfac{a}{k} \qrfac{b}{k}\qrfac{c}{k}\qrfac{d}{k}\qrfac{a^2q^{n+1}/bcd}{k}\qrfac{q^{-n}}{k}}
 {\qrfac{aq/b}{k}\qrfac{aq/c}{k}\qrfac{aq/d}{k}\qrfac{bcdq^{-n}/a}{k} \qrfac{aq^{n+1}}{k} \qrfac{q}{k}} q^k 
\nonumber\\
&=
\frac {(aq;q)_n\,(aq/bc;q)_n\,(aq/bd;q)_n\,(aq/cd;q)_n}
{(aq/b;q)_n\,(aq/c;q)_n\,(aq/d;q)_n\,(aq/bcd;q)_n}. \label{qdougall2}
\end{align}
\end{Example}
\begin{proof}
By dividing by the RHS, we form an equivalent identity of the form
\begin{equation*}
\sum_{k=0}^n F(n,k) = 1,
\end{equation*}
where $F(n,k)$ is defined as:
\begin{align*}
F(n,k)&= \frac{(aq/b;q)_n\,(aq/c;q)_n\,(aq/d;q)_n\,(aq/bcd;q)_n}
{(aq;q)_n\,(aq/bc;q)_n\,(aq/bd;q)_n\,(aq/cd;q)_n}\\
&\times \frac{1-aq^{2k}}{1-a}
\frac{\qrfac{a}{k} \qrfac{b}{k}\qrfac{c}{k}\qrfac{d}{k}}
 {\qrfac{aq/b}{k}\qrfac{aq/c}{k}\qrfac{aq/d}{k} \qrfac{q}{k}} q^k \\
&\times
 \frac{\qrfac{a^2q^{n+1}/bcd}{k}\qrfac{q^{-n}}{k}}
 {\qrfac{bcdq^{-n}/a}{k} \qrfac{aq^{n+1}}{k}} .
\end{align*}

We find that
\begin{align*}
F(n+1,k)-& F(n,k) = 
\frac{1-aq^{2k}}{1-a}
\frac{\qrfac{a}{k} \qrfac{b}{k}\qrfac{c}{k}\qrfac{d}{k}}
 {\qrfac{aq/b}{k}\qrfac{aq/c}{k}\qrfac{aq/d}{k}\qrfac{q}{k}} q^k 
\\
&\times 
\frac{(aq/b;q)_n\,(aq/c;q)_n\,(aq/d;q)_n\,(aq/bcd;q)_n}
{(aq;q)_{n+1}\,(aq/bc;q)_{n+1}\,(aq/bd;q)_{n+1}\,(aq/cd;q)_{n+1}}
\\
\times &\frac{\qrfac{q^{-n-1}}{k} \qrfac{a^2q^{n+1}/bcd}{k}}
{\left(1-q^{-n-1}\right)\left( 1-a^2q^{n+1}/bcd\right)}
\frac{(-1)}{\qrfac{aq^{n+2}}{k}\qrfac{bcdq^{-n}/a}{k}}
\\
&
\left\{\walker
\left(1-q^{-n-1}\right) \left( 1-aq^{n+1}/b\right) \left( 1-aq^{n+1}/c\right)\left( 1-aq^{n+1}/d\right) 
\right.\\
&\times \left(1-bcdq^{-n+k-1}/a\right) \left( 1-a^2q^{n+k+1}/bcd\right)\left( aq^{n+1}/bcd\right)
\\
&+ 
 \left(1-q^{-n+k-1}\right) \left( 1-aq^{n+k+1}\right)  \left( 1-aq^{n+1}/bc\right) \\
&\times \left.
\left( 1-aq^{n+1}/bd\right)\left( 1-aq^{n+1}/cd\right)\left( 1-a^2q^{n+1}/bcd\right)\walker
\right\}.
\end{align*}

Natural candidates for $u_k$ and $v_k$ are as follows.  
\begin{align*}
u_k&=\left(1-aq^k\right)\left(1-bq^k\right)\left(1-cq^k\right)\left(1-dq^k\right)\\
&\times \left(1-a^2q^{n+k+1}/{bcd}\right)\left( 1-q^{-n-1+k}\right)\\
\text{and }&\\
 v_k & = \left(1-aq^k/b\right)\left(1-aq^k/c\right)\left(1-aq^k/d\right)\\
 &\times
 \left(1-bcdq^{-n+k-1}/a\right)
\left(1-aq^{n+k+1}\right)\left( 1-q^{k}\right).
\end{align*}

Note  that $u_{n+1}=0$ and $v_0=0$, and 
$$w_0=u_0-v_0=
\left(1-a\right)\left(1-b\right)\left(1-c\right)\left(1-d\right)
 \left(1-\frac{a^2}{bcd}q^{n+1}\right)\left( 1-q^{-n-1}\right).$$

We now use the following case of \eqref{10p9n1-2}:
$a\mapsto aq^{2k}$, $b\mapsto aq^k/b$, $c\mapsto aq^k/c$, 
$d\mapsto aq^{n+k+1}$,  $e\mapsto aq^k/d$, and $f\mapsto q^k$
to rewrite $$\left(1-\frac{v_k}{u_k}\right).$$
In this manner, we obtain:

\begin{align*}
w_k&=u_k- v_k = \frac{\left(1-aq^{2k}\right)}{\left(1-a^2q^{2n+2}/bcd\right)}
\left(\frac{q^k}{aq^{n+1}}\right)
\\
&\times
\left\{\walker
\left(1-q^{-n-1}\right) \left( 1-aq^{n+1}/b\right) \left( 1-aq^{n+1}/c\right)\left( 1-aq^{n+1}/d\right) 
\right.\\
&\times \left(1-bcdq^{-n+k-1}/a\right) \left( 1-a^2q^{n+k+1}/bcd\right)\left( aq^{n+1}/bcd\right)
\\
&+ 
 \left(1-q^{-n+k-1}\right) \left( 1-aq^{n+k+1}\right)  \left( 1-aq^{n+1}/bc\right) \\
&\times \left.
\left( 1-aq^{n+1}/bd\right)\left( 1-aq^{n+1}/cd\right)\left( 1-a^2q^{n+1}/bcd\right)\walker
\right\}.
\end{align*}
The expression in the braces is the same as the expression in braces on the RHS of the expression for $F(n+1,k)-F(n,k)$. 

Thus we have, with $u_k$, $v_k$ and $w_k$ as above:
\begin{align*}
\sum_{k=0}^{n+1}&\left( F(n+1,k)-F(n,k)\right) \\
&= 
\frac{(aq/b;q)_n\,(aq/c;q)_n\,(aq/d;q)_n\,(aq/bcd;q)_n}
{(aq;q)_{n+1}\,(aq/bc;q)_{n+1}\,(aq/bd;q)_{n+1}\,(aq/cd;q)_{n+1}}\\
&\times 
\left[
\left(-aq^{n+1}\right)(1-b)(1-c)(1-d)\left( 1-a^2q^{2n+2}/bcd\right)\right]\\
&\times 
\sum_{k=0}^{n+1} \frac{w_k}{w_0}\frac{u_0u_1\cdots u_{k-1}}{v_1v_2\cdots v_{k}}
\\
&= 0.
\end{align*}
Thus $$\sum_{k=0}^{n} F(n,k)$$ is a constant. To finish the proof, we verify that this sum is $1$ when $n=0$. 
\end{proof}

For the sake of completeness, we note that if we take the limit as $d\to \infty$ in Jackson's sum \eqref{class87gl}, we obtain Rogers' sum for a terminating, very-well-poised $_6\phi_5$ sum
\cite[Equation~(2.4.2)]{grhyp}:
\begin{align}\label{class65gl}\allowdisplaybreaks\displaybreak[0]
_6\phi_5\,&\left[\begin{matrix}a,\,q\sqrt{a},-q\sqrt{a},b,c,q^{-n}\\
\sqrt{a},-\sqrt{a},aq/b,aq/c,
aq^{n+1}\end{matrix};q,\frac{aq^{n+1}}{bc}\right]\cr
&=\frac {(aq;q)_n\,(aq/bc;q)_n}
{(aq/b;q)_n\,(aq/c;q)_n}.
\end{align}
Indeed, one can suitably specialize the proof of the $q$-Dougall sum given above by taking limits as $d\to \infty$, make minor modifications in the choice of $u_k$ and $v_k$, and obtain a proof of \eqref{class65gl} by using Euler's Telescoping Lemma. 

Further, one may suitably specialize the parameters and take the limit $q\to 1$ to obtain Ekhad and Zeilberger's proof of Dougall's sum.  However, our proof is not quite a 21st century proof, because it has been found without using a computer. It uses the 
$_{10}\phi_9$ transformation formula which Bailey found in 1929, besides Euler's Telescoping Lemma and the WZ trick.  Nevertheless, conceptually, it is really a WZ proof.

%

\section{Generalized Hypergeometric Series} \label{sec:generalized}

If we have an elementary relation of the form
$$U-V=W$$
then we can get an identity 
using the Telescoping Lemma \eqref{telescoping-lemma}. For example, consider the elementary identity:
\begin{equation}\label{qchv-elem1}
 (1-b)a-(1-a)b=a-b.
 \end{equation}
This implies that for sequences $a_k$ and $b_k$, we have:
\begin{equation}\label{qchv-elem2}
(1-b_k)a_k-(1-a_k)b_k=a_k-b_k.
\end{equation}
Now one can appeal to the Telescoping Lemma \eqref{telescoping-lemma}, with 
$u_k = (1-b_k)a_k$, and $v_k= (1-a_k)b_k$ (so that $w_k=a_k-b_k$) and obtain:
\begin{align}\label{cv1}
\sum_{k=0}^n \frac{a_k-b_k}{a_0-b_0} &
\frac{\prodl_{j=0}^{k-1} (1-b_j)a_j}{\prodl_{j=1}^{k} (1-a_j)b_j}\nonumber
\\
&=\frac{(1-b_0)a_0}{a_0-b_0}\left(
\prod_{j=1}^n \frac{(1-b_j)a_j}{(1-a_j)b_j}
-
\frac{(1-a_0)b_0}{(1-b_0)a_0}\right).
\end{align}
We assume that the sequences $a_k$ and $b_k$ are such that none of the denominators in \eqref{cv1} are $0$. 

\begin{Remark} Identity \eqref{cv1} is an extension of Ramanujan's identity \eqref{ram1}. To recover \eqref{ram1} from \eqref{cv1}, set $a_k\mapsto -a_{k+1}/x$, and take the limits $b_k\to \infty$, for $k=0, 1, 2, \dots, n.$  
\end{Remark}

This kind of series may be called {\em Generalized Hypergeometric Series} because the sequences can be specialized to find hypergeometric or $q$-hyper\-geometric series.  

Observe that when $n=1$, the $q$-Chu--Vandermonde identity \eqref{qchu-vandermonde1} reduces to:
$$ 1 + \frac{(1-a)(1-q^{-1})}{(1-b)(1-q)}\frac{bq}{a} =\frac{1-b/a}{1-b}.$$
This simplifies to \eqref{qchv-elem1}. 
Our idea in this section is to take $n=1$ in $q$-series identities, and use \eqref{telescoping-lemma} to write down corresponding identities for generalized hypergeometric series. 

An alternative---and automated---approach of finding and proving identities with sequences as parameters is given by Kauers and Schneider \cite{kauers-schneider}. These authors mention Euler's identity and Ramanujan's identity, among other examples of such series. 

Ramanujan's sum appeared in Ap{\' e}ry's proof of the irrationality of $\zeta(3)$, see van der Poorten \cite{vanderpoorten}. Apart from Ramanujan, such identities have also been given by Gould and Hsu, Carlitz, Krattenthaler, Chu, and by Macdonald (see \cite{gb-milne} and the references cited therein).  
 
We now find a generalized hypergeometric summation that follows by combining the $n=1$ case of the $q$-Pfaff--Saalsch\" utz summation with Euler's Telescoping Lemma.

\begin{Example}[Macdonald] Let $a_k$, $b_k$ and $c_k$ be sequences, such that none of the denominators in \eqref{ps1} vanish. Then
\begin{align}\label{ps1}
\sum_{k=0}^n &\frac{(a_k-b_k)(a_k-c_k)}{(a_0-b_0)(a_0-c_0)} 
\frac{\prodl_{j=0}^{k-1} (1-b_j)(1-c_j)a_j}{\prodl_{j=1}^{k} (1-a_j)(a_j-b_jc_j)}\nonumber
\\
&=\frac{(1-b_0)(1-c_0)a_0}{(a_0-b_0)(a_0-c_0)}\nonumber
\\
\times &
\left(
\prod_{j=1}^n \frac{(1-b_j)(1-c_j)a_j}{(1-a_j)(a_j-b_jc_j)} 
-
\frac{(1-a_0)(a_0-b_0c_0)}{(1-b_0)(1-c_0)a_0}\right).
\end{align}
\end{Example}
\begin{Remark}
If we set $c_k\mapsto 1/a_k$, $a_k\mapsto b_k/b_0$, $b_k\mapsto c_kb_k/b_0$, we obtain  Macdonald's identity in the form presented in~\cite[eq.~1.13]{gb-milne}.
\end{Remark}
\begin{Remark}
If we take the limits $c_k\to\infty$ in \eqref{ps1}, we obtain \eqref{cv1}. 
\end{Remark}
\begin{proof}
We use the $n=1$ case of the  $q$-Pfaff--Saalsch\" utz  identity to find an elementary identity. Interchange $c$ and $a$ and set $n=1$ in \eqref{qpfaff-saalchutz1} to obtain: 
$$ 1 + \frac{(1-c)(1-b)(1-q^{-1})}{(1-a)(1-bc/a)(1-q)}q =\frac{(1-a/c)(1-a/b)}{(1-a)(1-a/bc)},$$
or
$$ (1-b)(1-c)a - (1-a)(a-bc)=(a-b)(a-c).$$
Replacing $a$, $b$ and $c$ by sequences $a_k$, $b_k$ and $c_k$, we obtain:
$$ (1-b_k)(1-c_k)a_k - (1-a_k)(a_k-b_kc_k)=(a_k-b_k)(a_k-c_k).$$

Now take $u_k = (1-b_k)(1-c_k)a_k,$ $v_k= (1-a_k)(a_k-b_kc_k),$ 
and  $w_k=(a_k-b_k)(a_k-c_k)$ in \eqref{telescoping-lemma} to obtain \eqref{ps1}. 
\end{proof}

\begin{Remark}
If we set $n=1$ in the very-well-poised sum $_6\phi_5$ sum \eqref{class65gl} we get the same elementary identity. 
\end{Remark}
 

\newcommand{\douguj}{(1-b_j)(1-c_j)(1-d_j)(a_j^2-b_jc_jd_j)a_j}

\newcommand{\douguz}{ (1-b_0)(1-c_0)(1-d_0)(a_0^2-b_0c_0d_0)a_0}
\newcommand{\douguk}{(1-b_k)(1-c_k)(1-d_k)(a_k^2-b_kc_kd_k)a_k}

\newcommand{\dougvj}{(1-a_j)(a_j-b_jc_j)(a_j-b_jd_j)(a_j-c_jd_j)}
\newcommand{\dougvz}{(1-a_0)(a_0-b_0c_0)(a_0-b_0d_0)(a_0-c_0d_0)}
\newcommand{\dougvk} {(1-a_k)(a_k-b_kc_k)(a_k-b_kd_k)(a_k-c_kd_k)}

\newcommand{\dougwj}{ (a_j-b_j)(a_j-c_j)(a_j-d_j)(a_j-b_jc_jd_j)}
\newcommand{\dougwz}{(a_0-b_0)(a_0-c_0)(a_0-d_0)(a_0-b_0c_0d_0)}
\newcommand{\dougwk}{(a_k-b_k)(a_k-c_k)(a_k-d_k)(a_k-b_kc_kd_k)}

Next, we obtain a generalized hypergeometric series summation by using Jackson's $q$-Dougall summation. 

\begin{Example}[Macdonald] Let $a_k$, $b_k$, $c_k$ and $d_k$ be sequences such that none of the denominators in \eqref{doug1} vanish. Then
\begin{align}\label{doug1}
\sum_{k=0}^n &\frac{ \dougwk}
{\dougwz} \nonumber \\
&\times \frac{\prodl_{j=0}^{k-1} \left[ \douguj \right]}
{\prodl_{j=1}^{k} \left[  \dougvj   \right]}   \nonumber
\\
&=\frac{ \douguz }
{\dougwz } \nonumber \\
&\times \left(  
\prod_{j=1}^n\left[ \frac{ \douguj}
{\dougvj } \right]\right. \nonumber\\
&-
\left. \walker \frac{ \dougvz}
{ \douguz}\right).
\end{align}
\end{Example}
\begin{Remark}
Set $a_k\mapsto b_ke$, $b_k\mapsto1/a_k$, $c_k\mapsto b_kc_k$, $d_k\mapsto a_k/d_k$ in 
identity \eqref{doug1} to obtain Macdonald's identity in the form  given in \cite[eq. (2.28)]{gb-milne}.
\end{Remark}
\begin{Remark}
If we set $d_k=0$ in \eqref{doug1}, we obtain \eqref{ps1}. 
\end{Remark}

\begin{proof}

Take $n=1$ and $a\mapsto a/q$ in the  $q$-Dougall sum \eqref{qdougall2}, to obtain the following elementary identity:
 \begin{align}
 (1-b)&(1-c)(1-d)(a^2-bcd)a\nonumber\\
&-
 (1-a)(a-bc)(a-bd)(a-cd)\nonumber\\
& =
 (a-b)(a-c)(a-d)(a-bcd)\label{doug-n1}
\end{align} 
Thus we have, for sequences $a_k$, $b_k$, $c_k$ and $d_k$:
 \begin{align*}
 (1-b_k)&(1-c_k)(1-d_k)(a_k^2-b_kc_kd_k)a_k\\
&-
 (1-a_k)(a_k-b_kc_k)(a_k-b_kd_k)(a_k-c_kd_k)\\
& =
 (a_k-b_k)(a_k-c_k)(a_k-d_k)(a_k-b_kc_kd_k).
\end{align*} 

Now using \eqref{telescoping-lemma}, we immediately obtain \eqref{doug1}
\end{proof}

We conclude this section with a few remarks on the symmetry properties of the Generalized Hypergeometric Series presented in this section. 

Let us represent the equation $U-V=W$ by $(U, V, W)$.   Given an equation of the form $U-V=W$, we can list 6 possible permutations of this equation. They are: $(U, V, W)$, $(U, W, V)$,  $(W, -V, U)$,  $(V, U, -W)$, $(W, U, -V)$, and $(V, -W, U)$. We can set $(u_k, v_k, w_k)$ to be equal to any of these  permutations, and potentially derive a different identity. 

However, it so happens, that for the  examples considered in this section, all the identities obtained by these permutations are equivalent to each other.

For example,  \eqref{qchv-elem2} is of the form $(U, V, W)$. If $(u_k, v_k, w_k)$ corresponds to $(U, V, W)$, then we  obtain \eqref{cv1}. Instead,  take  $(u_k, v_k, w_k)$ to correspond to $(U,W, V)$, that is, take  
 $u_k = (1-b_k)a_k,$ $v_k= a_k-b_k,$ and  $w_k=(1-a_k)b_k$ to obtain
\begin{align}\label{cv2}
\sum_{k=0}^n \frac{(1-a_k)b_k}{(1-a_0)b_0} &
\frac{\prodl_{j=0}^{k-1} (1-b_j)a_j}{\prodl_{j=1}^{k} (a_j-b_j)}\nonumber
\\
&=\frac{(1-b_0)a_0}{(1-a_0)b_0}\left(
\prod_{j=1}^n \frac{(1-b_j)a_j}{(a_j-b_j)}
-
\frac{a_0-b_0}{(1-b_0)a_0}\right).
\end{align}
Identity \eqref{cv2} can be obtained from \eqref{cv1} by re-labeling parameters. 
Set $a_k\mapsto a_k/b_k$  and $b_k\mapsto 1/b_k$ in \eqref{cv1}  to obtain \eqref{cv2}. 

By examining $(u_k, v_k, w_k)$ as they appear in the examples of this section, it seems remarkable that identities obtained by permuting the three are all equivalent. 
However, things become clear once we examine the  remarkable symmetry exhibited by the following 
elementary identity~\cite[eq. (11.1.1)]{grhyp}
\begin{align}
(1-x\lambda)&(1-x/\lambda)(1-\mu\nu)(1-\mu/\nu)-
(1-x\nu)(1-x/\nu)(1-\lambda\mu)(1-\mu/\lambda)\nonumber \\
&=\frac{\mu}{\lambda}(1-x\mu)(1-x/\mu)(1-\lambda\nu)(1-\lambda/\nu).\label{doug-sym}
\end{align} 
This beautiful identity follows from the $n=1$ case of  the $q$-Dougall sum.  Set $a\mapsto x\nu$, $b\mapsto x\lambda$, $c\mapsto x/\lambda$ and $d\mapsto \mu\nu$ in \eqref{doug-n1} to obtain \eqref{doug-sym}. Thus the symmetry of this elementary identity is responsible for the many symmetries of Macdonald's identity.

It is interesting to note the  important role played by elementary identities (obtained by taking $n=1$ in summation and transformation formulas) in \S\ref{sec:dougall} and \S\ref{sec:generalized}. These elementary identities appear in Andrews \cite{and-pfaff2} and Guo and Zeng \cite{guo-zeng} too. 


In the development of Generalized Hypergeometric Series, 
a key step was Krattenthaler's \cite{krat2} matrix inverse that generalized Andrews' matrix formulation of the Bailey Transform and the extension by Agarwal, Andrews and Bressoud \cite{aab}. 
Krattenthaler's Matrix Inverse was extended by Chu \cite{chu} using telescoping.  Macdonald's proof of Chu's results used the Telescoping Lemma, and  generalized Chu's results to the Generalized Hypergeometric Series presented in \cite{gb-milne} and in this section. See also
Kauers and Schneider \cite{kauers-schneider} for more examples of such series. 

Ramanujan's identity makes an interesting appearance in van der Poorten \cite{vanderpoorten}.  We refer the reader to Gasper and Rahman~\cite[\S 11.6]{grhyp} for Warnaar's \cite{warnaar1} extension of Macdonald's identity to theta hypergeometric series. The proof of Warnaar's identity is on the lines of the proof of \eqref{doug1}, and uses Euler's Telescoping Lemma. See also Spiridonov \cite{spiridonov}.

\section{Three-term Recurrence Relations}\label{sec:three-term}

In view of Example~\ref{lucas}, it is clear that Euler's Telescoping Lemma applies whenever we have a three-term recurrence relation such as that of the Fibonacci Numbers. 
For example, consider the beautiful extension of the Rogers--Ramanujan identities given by Garrett, Ismail and Stanton \cite{gis}: 
\begin{equation}\label{gis-rr}
\sum_{k=0}^{\infty} \frac{q^{k^2+km}}{\qrfac{q}{k}}
= \frac{(-1)^mq^{-{m\choose 2}}}{\pqrfac{q}{\infty}{q^5}\pqrfac{q^4}{\infty}{q^5}} d_m
-
\frac{(-1)^mq^{-{m\choose 2}}}{\pqrfac{q^2}{\infty}{q^5}\pqrfac{q^3}{\infty}{q^5}} e_m,
\end{equation}
where $m\geq 0$; and $d_m$ and $e_m$ satisfy the recurrence relations:
$$x_{n+2}=x_{n+1}+q^nx_n,$$
with the initial conditions:
$d_0=1,$  $d_1=1$ and $e_0=0,$ $e_1=1$.
The polynomials $d_m$ and $e_m$ are $q$-analogs of the Fibonacci numbers, and appear in Schur's original proof of the Rogers--Ramanujan identities, see \cite{and2}.

In this section we note a few applications of the Telescoping Lemma to such recurrences. In the process, we find that Euler's Telescoping lemma gives easy  alternate proofs of many identities found by Andrews \cite{and2, and3, and4}, Garrett \cite{garrett}, Briggs, Little and Sellers \cite{bls}, 
Cigler \cite{cigler-fib}, Goyt and Sagan \cite{goyt-sagan}, Goyt and Mathisen \cite{goyt-mathisen}, and Ismail \cite{ismail-fib}. 

In addition, we are able to find several identities that are extensions of Example~\ref{lucas} to the sequences considered by these authors. 

We begin our study of three-term recurrences by finding a useful solution of a linear recurrence relation.
\begin{Proposition}[Linear Recurrence Relation]
Let  $b_0, b_1, b_2,\dots$ and $c_0,$ $c_1,$ $c_2,$ $\dots$ be sequences.
Consider a sequence $x_0,$ $x_1,$ $x_2,$ $x_3 \dots $ that satisfies
\begin{equation}\label{linear-rec}
x_{n+1} =b_nx_n +c_n \text{ $(n\geq 0)$},
\end{equation}
and the value of $x_0$ is given.  Then we have:
\begin{equation}\label{linear-rec-soln}
x_{n+1}= x_0b_0b_1b_2\cdots b_n 
+ b_1b_2\cdots b_n
\sum_{k=0}^n \frac{c_k}{b_1b_2\cdots b_{k}}.
\end{equation}
\end{Proposition}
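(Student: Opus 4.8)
The plan is to treat \eqref{linear-rec} as a first-order linear recurrence and solve it by the standard ``integrating factor'' device, which in this setting amounts to rescaling $x_n$ so that the recurrence becomes a \emph{pure} telescoping one, exactly in the sense of \eqref{tel1}. First I would introduce the running product $P_n := b_0 b_1 \cdots b_n$, with the convention $P_{-1} = 1$, so that $P_n = b_n P_{n-1}$ for every $n \ge 0$.

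Next I would set $y_n := x_n / P_{n-1}$ (assuming, as the requirement that the denominators in \eqref{linear-rec-soln} not vanish already forces, that the relevant $b_j$ are nonzero). Dividing \eqref{linear-rec} by $P_n$ and using $b_n / P_n = 1/P_{n-1}$ turns it into $y_{n+1} = y_n + c_n/P_n$, i.e. $y_{n+1} - y_n = c_n/P_n$. This is a telescoping recurrence in $y$, so summing from $k = 0$ to $n$ as in \eqref{tel1} (with $y_0 = x_0$, since $P_{-1}=1$) gives
\[
y_{n+1} = x_0 + \sum_{k=0}^n \frac{c_k}{P_k}.
\]
Multiplying back by $P_n$ and recalling $x_{n+1} = y_{n+1} P_n$ then yields $x_{n+1} = x_0 P_n + P_n \sum_{k=0}^n c_k / P_k$.

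Finally I would reconcile this with the stated form \eqref{linear-rec-soln}, whose products run from $b_1$ rather than $b_0$. Since $P_n = b_0 (b_1 \cdots b_n)$ and $P_k = b_0 (b_1 \cdots b_k)$, the common factor $b_0$ cancels, so that $P_n \sum_{k=0}^n c_k/P_k = (b_1 \cdots b_n) \sum_{k=0}^n c_k/(b_1 \cdots b_k)$, which is precisely the second term of \eqref{linear-rec-soln}; the first term $x_0 P_n = x_0 b_0 b_1 \cdots b_n$ matches directly. I expect no genuine obstacle here: the whole argument is a single telescoping step, and the only thing demanding care is the bookkeeping of the product index ranges together with the empty-product conventions (so that the base case $n=0$ correctly reads $x_1 = b_0 x_0 + c_0$). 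As a division-free alternative one may instead induct on $n$, where the inductive step is the one-line computation $x_{n+2} = b_{n+1} x_{n+1} + c_{n+1}$ followed by absorbing $c_{n+1}$ into the sum as its $k = n+1$ term; this route invokes no nonvanishing of the $b_j$ beyond what the denominators of \eqref{linear-rec-soln} already demand.
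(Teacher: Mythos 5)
Your proof is correct, and at its core it is the same telescoping idea as the paper's, but packaged differently and with a small advantage in hypotheses. The paper proves this Proposition by direct substitution into Euler's Telescoping Lemma \eqref{telescoping-lemma}: it sets $u_k = x_{k+1}$ and $v_k = b_k x_k$, so that $w_k = u_k - v_k = c_k$, and reads off \eqref{linear-rec-soln} from the Lemma's conclusion. Your rescaling $y_n = x_n/P_{n-1}$ carries out by hand exactly the telescoping that the Lemma automates; indeed, the quantity telescoped inside the Lemma under the paper's substitution is $\prod_{j=1}^{k} u_j/v_j = x_{k+1}/(x_1 b_1\cdots b_k)$, which is your $y_{k+1}$ up to the constant factor $b_0/x_1$. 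The practical difference is in the side conditions: the paper's substitution divides by $w_0 = c_0$ and by $v_1\cdots v_k = b_1\cdots b_k\, x_1\cdots x_k$, so its intermediate display tacitly assumes $c_0 \neq 0$ and $x_1,\dots,x_n \neq 0$ --- conditions irrelevant to the statement itself, though removable; your route needs only $b_0,\dots,b_n \neq 0$, and your induction variant needs nothing beyond the nonvanishing of the denominators actually displayed in \eqref{linear-rec-soln}. What the paper's route buys instead is thematic economy: the Proposition is there to showcase that such recurrences fall to the Telescoping Lemma with no computation at all, whereas your argument, while cleaner as a standalone proof, re-derives the telescoping mechanism that the Lemma was designed to encapsulate.
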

\begin{Remark}
Recursion \eqref{linear-rec} is a prototype of a {\em linear recurrence relation}, see Wilf \cite{wilf-alg}.
\end{Remark}

\begin{proof}
 We will use Euler's Telescoping Lemma to solve this recursion. 

Set $u_k=x_{k+1}$ and $v_k=b_kx_k$ in \eqref{telescoping-lemma}. Then $w_k=c_k$ and $w_0=c_0$. We obtain:
$$
\sum_{k=0}^n \frac{c_k}{c_0}\frac{x_1\cdots x_{k}}{b_1b_2\cdots b_{k}x_1\cdots x_{k}}
= \frac{x_1}{c_0}\left( \frac{x_2x_3\cdots x_{n+1}}{b_1b_2\cdots b_{n}x_1x_2\cdots x_{n}}
 - \frac{b_0x_0}{x_1}\right).
$$
This immediately gives us \eqref{linear-rec-soln}.
\end{proof}
\begin{Example} Let $x_n$ be a sequence defined as:
\begin{equation*}
x_{n+1} =nx_n +(-1)^n \text{ $(n\geq 0)$},
\end{equation*}
with $x_0 =0$. 
Then from \eqref{linear-rec-soln}, we obtain
$$\frac{x_{n+1}}{n!} = \sum_{k=0}^n \frac{(-1)^k}{k!} = 1-\frac{1}{1!}+\frac{1}{2!}-\frac{1}{3!}
+\cdots +(-1)^n \frac{1}{n!}.
$$
\end{Example}

The reader may recognize that $x_{n+1}=d_n,$ where $d_n$ is the number of derangements of a partition on $n$ letters.  
The derangement numbers also satisfy a three term recurrence relation (see, for example, \cite{cameron}):
$$d_{n+2}=(n+1)d_{n+1}+(n+1)d_n,$$
with $d_0=1$ and $d_1=0$. 
(Coincidentally, both these recursions for the derangement numbers are due to Euler, see Hopkins and Wilson \cite{hw}.)

We consider below other interesting sequences (of numbers and polynomials) that appear in combinatorial contexts. 

First, we generalize the identities in Example~\ref{lucas}. 

\begin{Theorem}\label{lucas-gen}
Let  $a_n$ and $b_n$ be sequences, with $a_n\neq 0$, $b_n\neq 0$, for all $n$. 
Consider a sequence $x_n$ that satisfies (for $n\geq 0$):
$$x_{n+2}=a_nx_{n+1}+b_nx_{n}.$$
Then the following identities hold for $n=0,1,2,\dots$, provided the denominators are not $0$. 
\begin{align}
\sum_{k=1}^n \frac{b_k}{a_1a_2\cdots a_k}\frac{x_k}{x_2} 
&=\frac{1}{a_1a_2\cdots a_n}\frac{ x_{n+2}}{x_2}-1.
\label{lucas-gen1}\\
\sum_{k=1}^n \frac{a_{2k-1}}{b_1b_3\cdots b_{2k-1}}\frac{x_{2k}}{x_1}
 &= \frac{1}{b_1b_3\cdots b_{2n-1}}\frac{x_{2n+1}}{x_1}-1.\label{lucas-gen-even}\\
\sum_{k=1}^n \frac{a_{2k}}{b_2b_4\cdots b_{2k}}\frac{x_{2k+1}}{x_2}
 &= \frac{1}{b_2b_4\cdots b_{2n}}\frac{x_{2n+2}}{x_2}-1.\label{lucas-gen-odd}\\
\sum_{k=1}^n \frac{a_{k}}{b_1b_2\cdots b_{k}}\frac{x_{k+1}^2}{x_1x_2}
 &= \frac{1}{b_1b_2\cdots b_{n}}\frac{x_{n+1}x_{n+2}}{x_1x_2}-1.
 \label{lucas-gen-square}\\
\sum_{k=1}^n (-1)^k \frac{a_1a_2\cdots a_{k-1}}{b_1b_2\cdots b_{k}}\frac{x_{k+2}}{x_1}
 &= (-1)^n\frac{a_1a_2\cdots a_{n}}{b_1b_2\cdots b_{n}}\frac{x_{n+1}}{x_1}-1.
 \label{lucas-gen-alternating}\\
\sum_{k=1}^n \frac{b_{k-1}b_k}{a_{k-1}}
\prod_{j=1}^k \left[ \frac{a_{j-1}}{a_{j-1}a_j+b_j}\right]
 \frac{x_{k-1}}{x_2}
& =  1-\prod_{j=1}^n \left[ \frac{a_{j-1}}{a_{j-1}a_j+b_j}\right]
  \frac{x_{n+2}}{x_2}.
 \label{lucas-gen-div2n}
 \end{align}
\end{Theorem}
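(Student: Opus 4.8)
The plan is to obtain each of the six identities as a direct instance of the finite telescoping identity \eqref{eulerfinite2} (equivalently, the Telescoping Lemma \eqref{telescoping-lemma}), mirroring the six proofs of the Fibonacci identities in Example~\ref{lucas}. In every case I would choose $u_k$ and $v_k$ built from the terms $x_m$ of the sequence, arranged so that two things happen: first, the difference $w_k=u_k-v_k$ collapses, via the recurrence $x_{k+2}=a_kx_{k+1}+b_kx_k$, to a single scalar multiple of some $x_m$; second, the telescoping products $u_1\cdots u_{k-1}$ and $v_1\cdots v_k$ are nearly equal shifted products whose ratio reduces to a single term over a product of the $a_j$'s or $b_j$'s.

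For the first five identities the choices are immediate analogues of those in Example~\ref{lucas}. For \eqref{lucas-gen1} I take $u_k=x_{k+2}$ and $v_k=a_kx_{k+1}$, so the recurrence gives $w_k=b_kx_k$, and $u_1\cdots u_{k-1}/(v_1\cdots v_k)$ telescopes to $1/(a_1\cdots a_k\,x_2)$. For the even and odd sums \eqref{lucas-gen-even} and \eqref{lucas-gen-odd} I take $u_k=x_{2k+1}$, $v_k=b_{2k-1}x_{2k-1}$ (giving $w_k=a_{2k-1}x_{2k}$) and $u_k=x_{2k+2}$, $v_k=b_{2k}x_{2k}$ (giving $w_k=a_{2k}x_{2k+1}$), respectively. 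For the square sum \eqref{lucas-gen-square} I take $u_k=x_{k+1}x_{k+2}$ and $v_k=b_kx_kx_{k+1}$, so that $w_k=x_{k+1}(x_{k+2}-b_kx_k)=a_kx_{k+1}^2$. For the alternating sum \eqref{lucas-gen-alternating} I take $u_k=a_kx_{k+1}$ and $v_k=-b_kx_k$, so $w_k=x_{k+2}$ and the sign $(-1)^k$ is produced by the product $v_1\cdots v_k$. In each of these five the only real work is a one-line use of the recurrence to simplify $w_k$, followed by bookkeeping of the shifted products; the factors $x_1$ or $x_2$ in the denominators arise from the mismatch between the numerator product (starting at $x_2$ or $x_3$) and the denominator product (starting at $x_1$ or $x_2$).

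The main obstacle is the last identity \eqref{lucas-gen-div2n}, where the correct $v_k$ is not dictated by analogy. Guided by the Fibonacci case, where $v_k=2F_{k+1}$ and the coefficient $2$ equals $(1\cdot1+1)/1$, I would set $u_k=x_{k+2}$ and $v_k=\lambda_kx_{k+1}$ with $\lambda_k$ to be determined, and demand that $w_k=u_k-v_k$ be a multiple of $x_{k-1}$ alone. Writing $w_k=(a_k-\lambda_k)x_{k+1}+b_kx_k$ and then substituting $x_{k+1}=a_{k-1}x_k+b_{k-1}x_{k-1}$ reduces $w_k$ to a combination of $x_k$ and $x_{k-1}$; forcing the $x_k$ coefficient to vanish pins down $\lambda_k=(a_{k-1}a_k+b_k)/a_{k-1}$ and leaves $w_k=-(b_{k-1}b_k/a_{k-1})\,x_{k-1}$. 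This is exactly the shape needed: the product $v_1\cdots v_k$ then contributes $\prod_{j=1}^k(a_{j-1}a_j+b_j)/a_{j-1}$, whose reciprocal is the product appearing in \eqref{lucas-gen-div2n}, and a final multiplication by $-1$ (to absorb the minus sign in $w_k$) converts the telescoped right-hand side into $1-\prod_{j=1}^n[\,a_{j-1}/(a_{j-1}a_j+b_j)\,]\,x_{n+2}/x_2$.

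Two points will require care in that last derivation. The double use of the recurrence needs $k\ge 1$, which is consistent with the sum starting at $k=1$, and it needs $a_{k-1}\neq 0$, which is guaranteed by the hypothesis $a_n\neq 0$ for all $n$. One must also check that the indices $a_0$ and $b_0$ are legitimate; they are, since the recurrence is assumed to hold for $n\ge 0$. I expect the remaining verifications across all six parts to be routine product manipulations once the triples $(u_k,v_k,w_k)$ above are in hand.
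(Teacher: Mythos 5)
Your proposal is correct and follows essentially the same route as the paper: the six triples $(u_k,v_k,w_k)$ you choose, including $u_k=x_{k+2}$, $v_k=(a_k+b_k/a_{k-1})x_{k+1}$, $w_k=-b_{k-1}b_k x_{k-1}/a_{k-1}$ for \eqref{lucas-gen-div2n}, are exactly those used in the paper's proof, which likewise substitutes them into \eqref{eulerfinite2}. The only difference is presentational: you derive the last choice by the method of undetermined coefficients, whereas the paper simply states it and notes the computation of $w_k$ is easy.
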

\begin{Remark} There are many other Fibonacci Identities that follow from telescoping (see Vajda~\cite{vajda}), and can be extended as in Theorem~\ref{lucas-gen}. We do not undertake a detailed study here, we aim only  to illustrate the usefulness of Euler's Telescoping Lemma. 
\end{Remark}

\begin{proof}
The proofs are almost identical to the proofs of the corresponding identities in 
Example~\ref{lucas}. All these identities are special cases of \eqref{eulerfinite2}. 

To prove the first identity, we set $u_k=x_{k+2},$ and  $v_k=a_kx_{k+1}.$ Note that $w_k=b_kx_k.$ Substituting in \eqref{eulerfinite2}, we immediately obtain \eqref{lucas-gen1}. 

Next, set $u_k=x_{2k+1},$ and  $v_k=b_{2k-1}x_{2k-1}.$ Note that $w_k=a_{2k-1}x_{2k}.$ Substituting in \eqref{eulerfinite2}, we \eqref{lucas-gen-even}.

Similarly, set $u_k=x_{2k+2},$ and  $v_k=b_{2k}x_{2k}.$ Note that $w_k=a_{2k}x_{2k+1}.$ Substituting in \eqref{eulerfinite2}, we obtain
 \eqref{lucas-gen-odd}. 

Next,  set $u_k=x_{k+1}x_{k+2},$ and  $v_k=b_kx_{k}x_{k+1}.$ Note that $w_k=a_kx_{k+1}^2.$ Substituting in \eqref{eulerfinite2}, we obtain
\eqref{lucas-gen-square}.  
 
 Next,  set $u_k=a_kx_{k+1},$ and  $v_k=-b_kx_{k}.$ Note that $w_k=x_{k+2}.$ Substituting in \eqref{eulerfinite2}, we obtain
\eqref{lucas-gen-alternating}.  
 
 Finally,  set $u_k=x_{k+2},$ and  $v_k=(a_k+b_k/a_{k-1})x_{k+1}.$ It is easy to see that $w_k=-b_{k-1}b_kx_{k-1}/a_{k-1}.$ Substituting in \eqref{eulerfinite2}, we obtain
\eqref{lucas-gen-div2n}.  
 \end{proof}

As our first example, we find identities for the derangement numbers that are analogous to those for the Fibonacci numbers.  Note that since $d_1=0$, and some of our identities require division by $x_1$, we take a shifted sequence to ensure that we do not divide by $0$. So we define $D_n=d_{n+1}$ in our next example. 

 \begin{Example}[Derangement Number Identities]\label{derangement}
Consider the shifted Derangement Numbers defined as:
$D_0=0, D_1=1;$ and for $n\geq 0$,
$$D_{n+2}=(n+2)D_{n+1}+(n+2)D_{n}.$$
Then the following identities hold for $n=0,1,2,\dots$:
\begin{align}
\sum_{k=1}^n \frac{D_k}{(k+1)!} &= \frac{D_{n+2}}{(n+2)!}-1.\label{derangement1}\\
\sum_{k=1}^n \frac{D_{2k}}{1\cdot 3 \cdots (2k-1)} 
&= \frac{D_{2n+1}}{1\cdot 3 \cdots (2n+1)} -1.\label{derangement-even}\\
\sum_{k=1}^n \frac{D_{2k+1}}{2\cdot 4 \cdots (2k)} 
 &=\frac{D_{2n+2}}{2\cdot 4 \cdots (2n+2)}-1.\label{derangement-odd}\\
\sum_{k=1}^n \frac{D_{k+1}^2}{(k+1)!} 
 &= \frac{D_{n+1}D_{n+2}}{(n+2)!}-1.\label{derangement-square}\\
\sum_{k=1}^n (-1)^{k}\frac{D_{k+2}}{k+2} 
&= (-1)^{n}D_{n+1}-1.\label{derangement-alternating}\\
\sum_{k=1}^n \frac{2D_{k-1}}{(k+2)(k+1)!} 
&= 1- \frac{2D_{n+2}}{(n+2)(n+2)!}.\label{derangement-div-2n}
\end{align}
\end{Example}
\begin{Remark} We are unaware of the provenance of these identities. Derangement numbers are usually not considered to be analogous to the Fibonacci numbers in any way.  One can replace $D_k$ by $d_{k+1}$ to obtain identities for derangement numbers. 
\end{Remark}
\begin{proof}
These identities follow immediately from Theorem~\ref{lucas-gen} by taking
$a_k=k+2=b_k$ and  
replacing $x_n$ by 
$D_n$. Note also the initial conditions and the recurrence relation implies that $D_2=2$. 
\end{proof}

Another elementary sequence very similar to the Fibonacci sequence is the sequence of Pell numbers. The corresponding identities are given in the next example. 

\begin{Example}[Pell Number Identities]\label{pell}
Consider the Pell Numbers defined as:
$P_0=0, P_1=1;$ and for $n\geq 0$,
$$P_{n+2}=2P_{n+1}+P_{n}.$$
Then the following identities hold for $n=0,1,2,\dots$: 
\begin{align}
\sum_{k=1}^n \frac{P_k}{2^{k+1}} &= \frac{P_{n+2}}{2^{n+1}}-1.\label{pell1}\\
\sum_{k=1}^n 2P_{2k} &= P_{2n+1}-1.\label{pelleven}\\
\sum_{k=1}^n 2P_{2k-1} &= P_{2n}.\label{pellodd}\\
\sum_{k=1}^n 2P_{k}^2 &= P_{n}P_{n+1}.\label{pell-square}\\
\sum_{k=0}^n (-1)^{k}2^{k-1}P_{k+2} &= (-1)^{n}2^nP_{n+1}.\label{pell-alternating}\\
\sum_{k=1}^n \left(\frac{2}{5}\right)^{\! k}\frac{P_{k-1}}{2^2} &= 1- 
\left(\frac{2}{ 5}\right)^{\! n}\frac{P_{n+2}}{2}.\label{pell-div-2n}
\end{align}
\end{Example}
\begin{Remark} The first three identities are special cases of Pell Polynomials considered by Horadam and Mahon \cite{horadam-mahon}. Identity \eqref{pell-square} appears in Bicknell
\cite{bicknell}. 
\end{Remark}
\begin{proof}
These identities follow immediately from Theorem~\ref{lucas-gen} by taking
$a_k=2$ and $b_k=1$ and  
replacing $x_n$ by 
$P_n$. Note also the initial conditions and the recurrence relation implies that $P_2=2$. In the proof of \eqref{pellodd}, \eqref{pell-square} and \eqref{pell-alternating}, we bring $1$ to the LHS and may perform calculations similar to the proofs of the corresponding Fibonacci identities. 
\end{proof}

Our next example is of $q$-analogs of the Fibonacci Numbers that appeared in the work of Schur.
\begin{Example}[Schur's $q$-Fibonacci Numbers]\label{schur-q-fib} Consider the $q$-Fibonacci numbers defined as:
$F_0^{(a)}(q)=0$, $F_1^{(a)}(q)=1$, and
$$F_{n+2}^{(a)}(q) = F_{n+1}^{(a)}(q) + q^{n+a} F_n^{(a)}(q).$$
 Then we have, for $n=0,1,2,\dots$:
\begin{align}
\sum_{k=1}^n q^{k+a}F_k^{(a)}(q) &= F_{n+2}^{(a)}(q)-1.\label{and-lucas1}\\
\sum_{k=1}^n q^{-k^2-ka}F_{2k}^{(a)}(q) &= 
q^{-n^2-na}F_{2n+1}^{(a)}(q)-1.\label{garrett-lucaseven}\\
\sum_{k=1}^n q^{-(k-1)(k+a)}F_{2k-1}^{(a)}(q) &= 
q^{-(n-1)(n+a)}F_{2n}^{(a)}(q).\label{garrett-lucasodd}\\
\sum_{k=1}^n q^{-{{k\choose 2}}-(k-1)a}\left(F_{k}^{(a)}(q)\right)^2 &= 
q^{-{{n\choose 2}-(n-1)a}}F_{n}^{(a)}(q)F_{n+1}^{(a)}(q).\label{garrett-lucas-square}\\
\sum_{k=1}^n (-1)^{k-1} q^{-{{k\choose 2}}-(k-1)a}F_{k+1}^{(a)}(q) &= 
(-1)^{n-1} q^{-{{n\choose 2}-(n-1)a}}F_{n}^{(a)}(q).\label{qfib-lucas-alternating}\\
\sum_{k=1}^n q^{{2k-1}+2a}\frac{F_{k-1}^{(a)}(q)}{\qrfac{-q^{a+1}}{k}}&= 
1- \frac{F_{n+2}^{(a)}(q)}{\qrfac{-q^{a+1}}{n}} \label{qfib-lucas-2n}
\end{align}
\end{Example}
\begin{Remark} The notation we use is from Garrett \cite{garrett}. 
Note that $F_n^{(0)}(q)=e_n$ and $F_n^{(1)}(q)=d_{n+1}$, where $e_n$ and $d_n$ are the sequences that appear in \eqref{gis-rr}. Further, in the notation of Andrews \cite{and4}, we have
$F_n^{(0)}(q)=\mathcal{S}_n(q)$ and $F_n^{(1)}(q)=\mathcal{T}_{n}(q)$.
The $a=0$ case of \eqref{and-lucas1} is due to Andrews \cite{and2, and3}, see also Carlitz 
\cite{carlitz1, carlitz2} and Andrews \cite[Theorem 2]{and4}. The $a=0$ case of the identities \eqref{garrett-lucaseven}, \eqref{garrett-lucasodd}, \eqref{garrett-lucas-square} are due to Garrett \cite{garrett}. The last two identities appear to be new, as do the first four identities for $a\neq 0$.  When $a\neq 0$, then the numbers are called {\em shifted} $q$-Fibonacci numbers. 
\end{Remark}

\begin{proof}
These identities follow immediately from Theorem~\ref{lucas-gen} by taking
$a_k=1$, $b_k=q^{k+a}$, and  
replacing $x_n$ by 
$F_n^{(a)}(q)$. Observe that  $F_2^{(a)}(q)=1$. 

To show \eqref{garrett-lucasodd}, \eqref{garrett-lucas-square} and \eqref{qfib-lucas-alternating}, we need to additionally make calculations that are very similar to those in the corresponding identities in Example~\ref{lucas}.
\end{proof}

Next we give $q$-analogs of the Pell Number identities in Example \ref{pell}. 

\begin{Example}[$q$-Pell Numbers] Consider the $q$-Pell numbers defined as:
$P_0(q)=0$, $P_1(q)=1$, and
$$P_{n+2}(q) = \left(1+q^{n+1}\right)P_{n+1}(q) + q^{n} P_n(q).$$
 Then we have, for $n=0,1,2,\dots$:
\begin{align}
\sum_{k=1}^n q^{k}\frac{P_k(q)}{\qrfac{-q}{k+1}} 
&= \frac{P_{n+2}(q)}{\qrfac{-q}{n+1}}-1.\label{qpell-lucas1}\\
\sum_{k=1}^n \left(1+q^{2k}\right) q^{-k^2}P_{2k}(q) &= 
q^{-n^2}P_{2n+1}(q)-1.\label{qpell-lucaseven}\\
\sum_{k=0}^n \left( 1+q^{2k+1}\right) q^{-k(k+1)}P_{2k+1}(q) &= 
q^{-n(n+1)}P_{2n+2}(q).\label{qpell-lucasodd}\\
\sum_{k=1}^n \left(1+q^{k}\right) q^{-{k\choose 2}}P_{k}^{2}(q) &= 
q^{-{n\choose 2}}P_{n}(q)P_{n+1}(q).\label{qpell-lucas-square}\\
\sum_{k=0}^n (-1)^{k} q^{-{k+1\choose 2}}\qrfac{-q}{k}P_{k+2}(q) &= 
(-1)^{n} q^{-{n+1 \choose 2}}\qrfac{-q}{n+1}P_{n+1}(q).\label{qpell-lucas-alternating}\\
\sum_{k=1}^n \frac{q^{2k-1}}{1+q^{k}}
\prod_{j=1}^k \left[ \frac{1+q^j}{1+2q^j+q^{j+1}+q^{2j+1}}\right]&
 \frac{P_{k-1}(q)}{1+q}\nonumber \\
 =  1-\prod_{j=1}^n & \left[ \frac{1+q^j}{1+2q^j+q^{j+1}+q^{2j+1}}\right]
 \frac{P_{n+2}(q)}{1+q}.
 \label{qpell-lucas-2n}
\end{align}
\end{Example}
\begin{Remark} The $q$-Pell numbers were defined by  Santos and Sills \cite{santos-sills}. However, we have (for our convenience) redefined the recurrence relation a bit,  and taken the initial values $P_0(q)=0$ and $P_1(q)=1$. Our $P_{k}(q)$ is Santos and Sills' $P_{k-1}(q)$. 
Identity \eqref{qpell-lucasodd} is due to Briggs, Little and Sellers \cite[Theorem 6]{bls}. Identity \eqref{qpell-lucaseven} is also due to these authors. 
The rest of the identities appear to be new.

When $q=1$, then these identities reduce to identities for the Pell numbers in Example~\ref{pell}. 
\end{Remark}

\begin{proof}
These identities follow immediately from Theorem~\ref{lucas-gen} by taking
$a_k=\left(1+q^{k+1}\right)$, $b_k=q^k$, and  
replacing $x_n$ by 
$P_n(q)$. Note also the initial conditions of  the $q$-Pell numbers and the recurrence relation implies that $P_2(q)=1+q$. 
\end{proof}

\begin{Remark}
We can also define the {\em shifted} $q$-Pell numbers $P_k^{(a)}(q)$ by taking $a_k=1+q^{k+a+1}$ and $b_k=q^{k+a}$ and derive identities for them, as done in Example \ref{schur-q-fib}. We can also use Theorem~\ref{lucas-gen} to derive identities for the more general objects considered in \cite[\S 4]{bls}. 
\end{Remark}

Next we consider identities for the $q$-Fibonacci Polynomials. Fibonacci Polynomials satisfy the recurrence relation
$$F_{n+2}(x,y) = xF_{n+1}(x,y) + y F_{n}(x,y),$$
with initial conditions $F_0(x,y)=0$ and $F_1(x,y)=1.$ If we replace $x$ and $y$ by $1$, we obtain the Fibonacci Numbers.  Instead, if one takes $x=2$ and $y=1$, one gets the Pell Numbers. 

Another interesting special case is when $y=-1$ and $x\mapsto 2x$. Then $F_k(2x, -1)=U_{k-1}(x)$, where $U_k\equiv U_k(x)$ are the Chebychev polynomials of the second kind, see  \cite[p.~101]{aar} for the recurrence relation satisfied by $U_n$. Here $U_{-1}=0$ and $U_0=1$ and so $U_1=2x$. 

Recently, different kinds of $q$-Fibonacci Polynomials have arisen in combinatorial contexts. Here we give identities for two of them. They also extend the Chebychev polynomials of the second kind and the Pell Numbers. 
\begin{Example} 
Consider the Goyt-Sagan \cite{goyt-sagan} $q$-Fibonacci polynomials defined as:
Let $F_0^{M}(x,y, q)=0$, $F_1^{M}(x,y, q)=1$, and
$$F_{n+2}^{M}(x,y,q) = xq^nF_{n+1}^{M}(x,y,q) + yq^{n-1} F_n^{M}(x,y,q).$$
Let $F_k^M(q)\equiv F_k^M(x,y,q)$. Then we have, for $n=0,1,2,\dots$:
\begin{align}
\sum_{k=1}^n \frac{y}{x^{k+1}} q^{-{k\choose 2}-1}F_k^{M}(q) 
&= \frac{1}{x^{n+1}} q^{-{n+1\choose 2}}F_{n+2}^{M}(q)-1.\label{qFibM-lucas1}\\
\sum_{k=1}^n \frac{x}{y^{k}} q^{-k^2+3k-1}F_{2k}^{M}(q) &= 
\frac{1}{y^n} q^{-n^2+n}F_{2n+1}^{M}(q)-1.\label{qFibM-lucaseven}\\
\sum_{k=1}^n \frac{1}{y^{k}} q^{-k^2+2k}F_{2k+1}^{M}(q) &= 
\frac{1}{xy^{n}} q^{-n^2}F_{2n+2}^{M}(q)-1.\label{qFibM-lucasodd}\\
\sum_{k=1}^n \frac{1}{y^{k}} q^{-{{k\choose 2}} +k}\left(F_{k+1}^{M}(q)\right)^2 &= 
\frac{1}{xy^{n}} q^{-{{n\choose 2}}}F_{n+1}^{M}(q)F_{n+2}^{M}(q)-1.\label{qFibM-lucas-square}\\
\sum_{k=1}^n (-1)^{k} \frac{x^{k-1}}{y^{k}} F_{k+2}^{M}(q) &= 
(-1)^{n} \frac{x^n}{y^{n}} q^{n}F_{n+1}^{M}(q)-1.\label{qFibM-lucas-alternating}\\
\sum_{k=1}^n  \left(\frac{xq}{y}\right)^{k-2} \frac{F_{k-1}^{M}(q)}{\qrfac{-qx^2/y}{k}}&= 
1- \frac{x^{n-1}}{y^n}\frac{F_{n+2}^{M}(q)}{\qrfac{-qx^2/y}{n}} \label{qFibM-lucas-2n}
\end{align}
\end{Example}
\begin{Remark}
The polynomials $F_k^M(x,y,q)$ were considered by Goyt and Sagan \cite{goyt-sagan} with slightly different initial conditions. Our $F_n^M(q)$ are Goyt and Sagan's $F_{n-1}$ and 
$F_{n-1}^M(q)$ in Goyt and Mathisen \cite{goyt-mathisen}. Goyt and Sagan have proved the $x=1=y$ case of \eqref{qFibM-lucas-2n}. Set $x=1=y$ in any of these identities to obtain an identity for the relevant $q$-Fibonacci numbers. In addition, one can set $x=2$, $y=1$ to get identities for another $q$-analog of the Pell Numbers. 
\end{Remark}
\begin{proof}
These identities follow immediately from Theorem~\ref{lucas-gen} by taking
$a_k=xq^{k}$, $b_k=yq^{k-1}$, and  
replacing $x_n$ by 
$F_n^M(x,y, q)$. Note also the initial conditions of  the Goyt-Sagan $q$-Fibonacci polynomials and the recurrence relation implies that $F_2^M(q)=x$.
\end{proof}

\begin{Example} Consider the Goyt-Mathisen \cite{goyt-mathisen} $q$-Fibonacci polynomials defined as:
 $F_0^{I}(x,y, q)=0$, $F_1^{I}(x,y, q)=1$, and
$$F_{n+2}^{I}(x,y,q) = xq^{n}F_{n+1}^{I}(x,y,q) + yq^{2(n-1)} F_n^{I}(x,y,q).$$
Let $F_k^I(q)\equiv F_k^I(x,y,q)$. Then we have, for $n=0,1,2,\dots$:
\begin{align}
\sum_{k=1}^n (-1)^{k} \frac{x^{k-1}}{y^{k}} q^{-{k\choose 2}} F_{k+2}^{I}(q) &= 
(-1)^{n} \frac{x^n}{y^{n}} q^{-\frac{n(n-3)}{2}}F_{n+1}^{I}(q)-1.\label{qFibI-lucas-alternating}\\
\sum_{k=1}^n  \frac{y^2}{x^2}\left(\frac{x}{qx^2+y}\right)^{k}
q^{-\frac{(k-2)(k-5)}{2}}
F_{k-1}^{I}(q) &= 
1- \frac{x^n}{x(qx^2+y)^n}
q^{- {n\choose 2}}
F_{n+2}^{I}(q) \label{qFibI-lucas-2n}
\end{align}
\end{Example}
\begin{Remark}
The polynomials $F_k^I(q)$ were considered by Goyt and Mathisen \cite{goyt-mathisen} with slightly different initial conditions. Our $F_n^I(q)$ are 
$F_{n-1}^I(q)$ in Goyt and Mathisen \cite{goyt-mathisen}. Goyt and Mathisen proved 
the identities that follow from the first 4 identities in Theorem~\ref{lucas-gen}, see Theorem 4.4, Theorem 4.6, Theorem 4.5 and Theorem 4.7 in \cite{goyt-mathisen}. (In two of these, we have to reverse the sum to obtain the identities in the form presented in \cite{goyt-mathisen}.) Set $x=1=y$ in any of these identities to obtain an identity for the  $q$-Fibonacci numbers that correspond to the Goyt-Mathisen $q$-Fibonacci polynomials. In addition, one can set $x=2$, $y=1$ to get identities for another $q$-analog of the Pell Numbers. 
\end{Remark}
\begin{proof}
These identities follow immediately from \eqref{lucas-gen-alternating} and \eqref{lucas-gen-div2n} by taking
$a_k=xq^{k}$, $b_k=yq^{2(k-1)}$, and  
replacing $x_n$ by 
$F_n^I(x,y, q)$. Note also the initial conditions of  the Goyt and Mathisen's $q$-Fibonacci polynomials and the recurrence relation implies that $F_2^I(q)=x$, and $F_3^I(q)=qx^2+y$. These two  values show up in formula \eqref{qFibI-lucas-2n}. 
\end{proof}

Cigler \cite{cigler-fib} considered $q$-Fibonacci polynomials defined as:\\
$F_0^{C}(x,y, q)=0$, $F_1^{C}(x,y, q)=1$, and
$$F_{n+2}^{C}(x,y,q) = xF_{n+1}^{C}(x,y,q) + t(yq^{n}) F_n^{C}(x,y,q).$$
Here $t$ is a general function. Cigler considers special cases $t(y)=y$ and $t(y)=y/q$.  The $x=1$ and $t(y)=y$ case is $\mathcal{S}_n(y,q)$ considered by Andrews \cite{and4}.  One can easily write down identities that follow from Theorem \ref{lucas-gen} for these polynomials, which will generalize the identities in Example \ref{schur-q-fib}.  Cigler's polynomials are also motivated by work done by Carlitz \cite{carlitz2}, and thus we can get identities for Carlitz's $q$-Fibonacci polynomials too.

Finally, before closing this section, we note another generalization of Fibonacci numbers given by Ismail \cite{ismail-fib}. These are defined as:
$F_0(\theta)=0$, $F_1(\theta)=1$, and
$$F_{n+2}(\theta) = 2\sinh\theta F_{n+1}(\theta) + F_n(\theta).$$
Once again, one can take the $a_k=2\sinh \theta$ and $b_k=1$ in Theorem \ref{lucas-gen} and derive identities for $F_k(\theta)$. The identity that follows from \eqref{lucas-gen-square} appears in Ismail \cite{ismail-fib}.

\section{Concluding Remarks}

After studying Euler's proof of the Pentagonal Number Theorem,  Andrews \cite{and1} emphasizes 
\lq\lq how valuable it is to study and understand the central ideas behind major pieces of mathematics produced by giants like Euler". 

We  cannot agree more!
As we have seen, an elementary identity appearing in Euler's  proof of his Pentagonal Number Theorem can be used to prove a wide variety of identities. 
Its power is demonstrated by our EZ proof of Jackson's $q$-analog of Dougall's result. This is because all  the key summation theorems  of hypergeometric and $q$-hypergeometric series (for terminating and non-terminating series) are special cases of the $q$-Dougall sum. 
Similarly,  Theorem~\ref{lucas-gen} is able to unify many identities for many combinatorial sequences.

Further study of Euler's Telescoping Lemma should be fruitful. A careful survey of Fibonacci identities can possibly yield generalizations to sequences  of the kind considered here.  In addition, Theorem~\ref{lucas-gen} applies to orthogonal polynomials as well, since they satisfy a three-term recurrence relation, but we have hardly considered them here.

\end{document}